%% file: structural.tex
\documentclass{article}
\usepackage{graphicx} % Required for inserting images
\usepackage{todonotes}
\input{header}
\DeclareMathOperator{\di}{di}

\title{Structural Infinite-Exponent Partition\\Relations and Weak Choice Principles}
\author{Lyra A.\ Gardiner\footnote{Department of Pure Mathematics and Mathematical Statistics \& Trinity College, University of Cambridge\\\textsf{lag44@cam.ac.uk}}\ \ and Jonathan Schilhan\footnote{University of Vienna,
Institute of Mathematics,
Kurt Gödel Research Center,
Kolingasse 14-16,
1090 Vienna,
Austria\\\textsf{jonathan.schilhan@univie.ac.at}}}
\date{}

\begin{document}

\maketitle
We investigate infinite-exponent partition relations on arbitrary relational structures, with a focus on linear orders and graphs. Any such relation contradicts the Axiom of Choice. We show that there are some such relations which are consistent with \s{ZF} which imply the failure not just of Choice but also of the Kinna-Wagner Selection Principle \s{KWP}$_1$ and the Ordering Principle \s{O}.\blfootnote{2020 \textit{Mathematics Subject Classification.} 03E02, 03E25, 05D10, 06A05.}\blfootnote{\textit{Key words and phrases.} Partition relations, Axiom of Choice, linear orders, Ramsey theory.}
\section{Introduction}\label{introduction:section}
\subsection{Background and motivation}\label{background:subsection}
The results in this paper are an offshoot of the work done in \cite{higheranalogues}. Several of our results in that paper show in \s{ZF} that for certain order types $\tau$, a partition relation with exponent $\tau$ can never hold on an order of the form $\twoalphalex$, for $\alpha$ an ordinal, and by extension can never hold on any linear order $\L$ which embeds into some $\twoalphalex$ (partition relations and the arrow notation used to express them will be explained in \S\ref{partitionrelations:subsection}). In \s{ZFC}, every linear order $\L$ embeds into some $\twoalphalex$, but this is not in general true in \s{ZF}. It is natural, then, to ask whether there consistently exists \emph{any} $\L$ for which
\[\L \rightarrow (\tau)^\tau.\]
The results of Section \ref{consistency:section} show that in a number of cases the answer is yes. The particular linear orders $\L$ we obtained had the property that the elements of $L$ were sets of sets of ordinals, leading naturally to the question of whether this was necessary, or whether there consistently existed such $\L$ with $L \subseteq \twoalpha$ for some $\alpha$. In Section \ref{choice:section} we show that it is in fact necessary, and that any such $\L$ has the property that the set $L$ does not inject into the power set of an ordinal, a failure of the Kinna-Wagner Selection Principle. Similar results in the setting of graphs are also proved in both sections.
\subsection{Partition relations}\label{partitionrelations:subsection}
For $A$, $B$ structures in some relational language, write $[A]^B$ to denote the set of substructures of $A$ which are isomorphic to $B$ (the set of \emph{copies of B in A}).\footnote{In structural Ramsey theory, this is usually denoted $\binom{A}{B}$, but we avoid this notation as it clashes with the notation used for polarised partition relations (cf.\ \cite[\S4]{ieprsonr}, \cite[\S4]{higheranalogues}).} For $A, B, C$ structures in the same language with $[A]^B$ and $[B]^C$ non-empty and $\chi$ a set, the \emph{partition relation}
\[A \rightarrow (B)^C_\chi\]
is the statement that for any $F : [A]^C \to \chi$, thought of as a \emph{colouring} of the copies of $C$ in $A$, there is some $H \in [A]^B$ which is \emph{homogeneous} (or \emph{monochromatic}) for $F$, in the sense that $\left|F \im [H]^C\right| = 1$. When $\chi$ is omitted it is understood to equal $2 = \{0,1\}$. The relation $A \rightarrow (B)^C_\chi$ is clearly preserved by replacing any of $A$, $B$, $C$ with isomorphic structures; in the particular setting of linear orders, we will often use \emph{order types}, defined in \S\ref{terminology:subsection} in place of concrete orders in this notation. The partition relation has monotonicity in its terms in the following sense: the relation $A \rightarrow (B)^C_\chi$ is preserved if $A$ is replaced by a \emph{larger} structure $A'$, if $B$ is replaced by a \emph{smaller} structure $B'$, or if $\chi$ is replaced by a smaller set $\chi'$.\footnote{More precisely, we mean $A'$, $B'$, $\chi'$ such that $[A']^A \neq \emptyset$, $[B]^{B'} \neq \emptyset$, and $\chi'$ injects into $\chi$.} We do not in general have monotonicity in the \emph{exponent} $C$.

Our work focuses on \emph{infinite-exponent partition relations} (IEPRs), and in particular on relations whose exponent $C$ contains proper subcopies of itself: in this setting, even the \emph{minimal relation} $A \rightarrow (C)^C$ is not guaranteed to hold. Indeed, under the Axiom of Choice, such a relation can never hold; it is in light of this that we work in \s{ZF} without Choice when discussing IEPRs.
\begin{prop}\s{(ZFC)}\label{trivialunderchoice:prop}
    Let $A$ and $C$ be structures such that $[A]^C \neq \emptyset$ and $[C]^C \supsetneq \{C\}$. Then
    \[A \centernot \rightarrow (C)^C.\]
\end{prop}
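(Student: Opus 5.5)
We prove this with the classical Erdős–Rado style argument: use a well-ordering to build a colouring with no homogeneous copy. The key observation is that if $C$ has a proper subcopy $C' \in [C]^C$ with $C' \neq C$, then every copy $H \in [A]^C$ also has a proper subcopy. Indeed, fix an isomorphism $\iota : C \to H$; then $\iota[C']$ is a copy of $C$ inside $H$ (substructures of substructures are substructures), and it is distinct from $H$ because $\iota$ is a bijection and $C' \subsetneq C$. So for every $H \in [A]^C$ the set $[H]^C$ contains at least two elements.

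Using the Axiom of Choice, well-order $[A]^C$ as $\langle H_\xi : \xi < \kappa \rangle$. Define $F : [A]^C \to 2$ by transfinite recursion so that for every $\xi$ the set $[H_\xi]^C$ contains elements of both colours. Since each $[H_\xi]^C$ is a set of size at least $2$ (possibly infinite), we aim to make every $[H]^C$ non-monochromatic by a diagonalisation. The naive version goes as follows: at stage $\xi$, pick two distinct elements $X_\xi, Y_\xi \in [H_\xi]^C$ not yet assigned a colour, and give them colours $0$ and $1$. If fewer than two unassigned elements remain, use already assigned ones where possible.

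The main obstacle is that this bookkeeping can fail. Copies may be shared across many $[H]^C$, and $[H_\xi]^C$ might have all its elements already coloured the same way. The fix is the standard Erdős–Rado trick, which avoids recursion entirely.

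Identify the universe of $A$ with a set carrying a well-order $\prec$; copies of $C$ are then subsets of $A$. Call two copies $X, Y \in [A]^C$ equivalent if their symmetric difference is "small" in a suitable sense. Choose a representative $R_{[X]}$ from each equivalence class, and set $F(X) = 0$ iff $X$ is $\subseteq$-comparable in a fixed way to its representative, for instance $F(X)=0$ iff $X = R_{[X]}$.

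In our setting a simpler version should work, because the exponent and the target are the same $C$. Define the equivalence relation $X \sim Y$ iff $X \cap H = Y \cap H$ for some $H$ of cofinitely-many-elements type. Then put $F(X)=0$ iff $X$ and $R_{[X]}$ differ in an even number of places. The homogeneous copy $H$ contains proper subcopies differing from $H$ in a controlled way, and we want one such subcopy differing in exactly one point, which would flip the parity.

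So I would first try to find, inside any $H$, two subcopies whose symmetric difference is a single point, iterating $\iota$. Concretely, the subcopies $H \supsetneq \iota[H] \supsetneq \iota^2[H] \supsetneq \cdots$ give a strictly decreasing chain. Comparing $\iota^n[H]$ with $\iota^{n+1}[H]$ is the step I expect to need real care: the parity/equivalence must be chosen so that consecutive members of such a chain are equivalent but get opposite colours, for instance by defining $\sim$ via agreement modulo the action of injective self-embeddings. If that fails, I would fall back to the alternative of colouring by parity of the least $n$ witnessing membership in a chain. That $n$ is well defined once a choice function on $[A]^C$ fixes one embedding $\iota_X$ per copy, and then $F(\iota_H[H]) \neq F(H)$ should give non-homogeneity.
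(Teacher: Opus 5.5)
Your reduction to ``every copy of $C$ has a proper subcopy'' is correct, and so is your diagnosis of why the naive two-at-a-time recursion breaks down. But the proposal never reaches a colouring that provably has no homogeneous copy. The Erd\H{o}s--Rado colouring you settle on sets $X\sim Y$ iff $X\triangle Y$ is finite, and $F(X)$ to be the parity of $|X\triangle R_{[X]}|$. It defeats a copy $H$ only if $H$ has a subcopy differing from it in an odd finite number of points, and that is special to $[\omega]^\omega$. Two examples show the failure. For $C=A=\bigsqcup_{n\in\omega}K_2$ (a perfect matching), every copy is a union of edges. So copies with finite symmetric difference differ in an even number of vertices, and your $F$ is constantly $0$. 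For $C=\bigsqcup_{n\ge1}K_n$, a short counting argument shows that no proper subcopy of a copy $H$ is cofinite in $H$. So no finite-difference variant relates $F(H)$ to the colours of its subcopies at all. Your fallback, ``parity of the least $n$ witnessing membership in a chain'', is not well defined either. Each copy is the $0$th term of its own chain, and a copy can occur at depths of both parities in chains started from different copies. So $F(\iota_H[H])\neq F(H)$ is hoped for, not achieved.

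The missing ingredient is a way to give a proper subcopy of each copy the opposite colour \emph{coherently}. The paper keeps your well-ordering $\langle H_\xi:\xi<\kappa\rangle$ but changes what happens at each stage. Whenever $H_\xi$ is still uncoloured at stage $\xi$, it colours an entire descending $\omega$-chain $H_\xi\supsetneq H_{\xi_1}\supsetneq\cdots$ of uncoloured copies by alternation, if such a chain exists. Given $X$, it takes the least index $\xi'$ of a copy in $[X]^C$. Then every copy inside $H_{\xi'}$ has index $\ge\xi'$, so any of them coloured before stage $\xi'$ was coloured by alternation. If $H_{\xi'}$ was not itself coloured by alternation, then at stage $\xi'$ the iterates of a proper self-embedding of $H_{\xi'}$ already contained such a copy. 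Any copy coloured by alternation is defeated by the next term of its chain.

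Alternatively, your fallback can be repaired as follows. Choose $f(X)\in[X]^C\setminus\{X\}$ for every $X$. Since $f(X)\subsetneq X$, the iterates $f^k(Y)$ are pairwise distinct. Hence if $f^m(X)=f^n(Y)$, the difference $m-n$ does not depend on the witnesses $m,n$. Choose a base point $R$ in each class of the equivalence relation ``$f^m(X)=f^n(Y)$ for some $m,n$''. Set $F(X)\equiv m-n \pmod 2$, where $f^m(X)=f^n(R)$. Then $F(f(X))\neq F(X)$ for all $X$, so no copy is homogeneous. Either mechanism is what your proposal lacks.
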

\begin{proof}
    Under \s{AC}, every set is well-orderable; fix an enumeration $\langle X_\alpha : \alpha < \gamma\rangle$ of $[A]^C$ for some $\gamma$. Say that $\alpha$ is the \emph{index} of $X_\alpha$. We inductively define a colouring $F : [A]^C \to 2$ with no homogeneous set.

    For $\alpha < \gamma$, at stage $\alpha$, if the value of $F(X_\alpha)$ has already been determined, proceed to stage $\alpha+1$; otherwise, determine the value of $F(X_\alpha)$ by means of the following procedure. If there exists a descending $\omega$-chain $X_\alpha = X_{\alpha_0} \supsetneq X_{\alpha_1} \supsetneq X_{\alpha_2} \supsetneq \dots$ such that $F(X_{\alpha_n})$ has not been decided for any $n \in \omega$, choose such a chain, and for each $n$ set
    \[F(X_{\alpha_n}) \coloneqq \begin{cases*}
        0 & if $n$ is even;\\
        1 & if $n$ is odd.
    \end{cases*}\]
    In this case say that the $X_{\alpha_n}$ have been \emph{coloured by alternation}. If it is not possible to find such a sequence, simply set $F(X_{\alpha}) = 0$.

    It is clear that any $X \in [A]^C$ which was coloured by alternation is not homogeneous for $F$, and also that if any $X_\alpha$ was coloured at a stage $\beta < \alpha$, it was coloured by alternation. We claim that no $X \in [A]^C$ is homogeneous for $F$; to see this, it suffices to show that for every $X \in [A]^C$, there is some $X' \in [X]^C$ which was coloured by alternation. Let $X = X_\alpha$ be arbitrary. Let $\alpha'$ be the minimal index of an element of $[X_\alpha]^C$, and consider $X_{\alpha'} \in [X_\alpha]^C$. Since $\alpha'$ is the minimal index of an element of $[X_\alpha]^C$, it is also the minimal index of an element of $[X_{\alpha'}]^C$, i.e.\ for any $X_\beta \in [X_{\alpha'}]^C$, $\beta \ge \alpha'$. If $X_{\alpha'}$ was coloured by alternation, we are done; otherwise, it must be the case that at stage $\alpha'$, some of the elements of $[X_{\alpha'}]^C$ already had colours assigned, such that it was impossible to find a descending chain of substructures whose colour under $F$ had not yet been determined. But for $F(X_\beta)$ to have been determined by stage $\alpha' < \beta$, it must have been coloured by alternation.
\end{proof}

It is therefore the case that any IEPR of the form $A \rightarrow (C)^C$ implies the failure of \s{AC}; in Section \ref{choice:section}, we show that certain IEPRs which are consistent with \s{ZF} also imply the failure of weaker choice principles than the full Axiom of Choice.
\subsection{Linear orders}\label{terminology:subsection}
Much of this paper relates to the combinatorics of linear orders. We reserve the letters $\sigma$ and $\tau$ for \emph{order types}, i.e.\ isomorphism classes of linear orders under order-isomorphism. Addition of linear orders or order types refers to concatenation, so e.g.\ $\sigma + \tau$ is the order type of a linear order consisting of an initial segment ordered as $\sigma$ followed by a final segment ordered as $\tau$. Multiplication is colexicographic, so $\sigma\tau$ refers to the order type of $\tau$-many copies of $\sigma$. For $\sigma$ an order type, $\sigma^*$ denotes its reverse. $\omega$ is the order type of the natural numbers, and $\eta$ is the order type of the rational numbers, each under their usual ordering. The \emph{embeddability relation} $\le$ is defined as follows: $\sigma \le \tau$ iff any $\L$ of type $\tau$ has a suborder of type $\sigma$.

For $\L$ a linear ordering, a \emph{condensation class} of $\L$ (under the finite condensation) is an equivalence class of the relation $x \sim_L y$ given by
\[x \sim _L y \iff \{z \in L : x \le z \le y \text{ or } y\le z \le x\}\text{ is finite.}\]
Condensation classes are necessarily either finite or they are ordered as one of $\omega$, $\omega^*$, or $\omega^* + \omega$. We will be particularly interested in the condensation classes of certain subsets $A \subseteq L$ of an ambient linear order $\L$, equipped with the induced suborder. For $A \subseteq L$, write $\ccomega(A)$ for the set of condensation classes of $A$ ordered as $\omega$, and $\ccomegastar(A)$ for the set of condensation classes of $A$ ordered as $\omega^*$.

A linear order $\L$ is said to be \emph{scattered} if no suborder of $\L$ is densely ordered; equivalently, for $L$ well-orderable, $\L$ is scattered iff $[\L]^\eta = \emptyset$. Iterations of the finite condensation allow for a detailed analysis of the structure of scattered orders; see \cite[\S5]{rosenstein}.

Intervals are written with parentheses and brackets as usual; $(x,\rightarrow)$ and $[x,\rightarrow)$ denote final segments, and $(\leftarrow,x)$, $(\leftarrow, x]$ denote initial segments. Some of our proofs involve sets equipped with multiple different linear orderings; in these settings, we write e.g.\ $(x,y)_<$ and $(x,y)_\prec$ to disambiguate which order is meant when speaking of intervals.

An order type $\sigma$ is \emph{(additively) indecomposable} if whenever $\sigma$ is written as a sum $\sigma = \tau + \varphi$, either $\sigma \le \tau$ or $\sigma \le \varphi$. Further, $\sigma$ is \emph{strictly indecomposable to the left (resp.\ right)} if whenever $\sigma$ is written as a sum $\sigma = \tau + \varphi$, $\sigma \le \tau$ (resp.\ $\sigma \le \varphi$).

The orders $\twoalphalex$ for $\alpha$ an infinite ordinal are of particular significance to our work; the paper \cite{higheranalogues} was an investigation into the partition relations which hold on these orders. They are ordered by the \emph{lexicographic order} $\lex$, where for $x, y \in \twoalpha$,
\[x \lex y \iff x(\delta) < y(\delta),\]
where $\delta = \delta(x,y)$ is the minimal ordinal $< \alpha$ on which the sequences $x$ and $y$ disagree.

The set $\lessomega$ of finite binary sequences is also very important to our work; we equip it with a tree ordering and a linear ordering. The tree $\lessomegatree$ is ordered by $\sqsubseteq$, where $x \sqsubseteq y$ iff $x$ is an initial segment of $y$, i.e.\ $y \restriction |x| = x$. The linear order $\lessomegalex$ is obtained by extending the definition of the lexicographic ordering so that $x \lex y$ includes the cases $x ^\frown\langle 1\rangle \sqsubseteq y$ and $x \sqsupseteq y^\frown \langle0\rangle$; this makes $\lessomegalex$ a linear order ordered as $\eta$.

\section{Consistency results}\label{consistency:section}
We begin with the key motivating result which led to the work done in this paper:
\begin{fact}\label{motivation:fact}
    Let $\tau$ be a countable linear order type such that either $\omega\omega^* \le \tau$ or $\omega^*\omega \le \tau$. Then for any ordinal $\alpha$,
    \[\twoalphalex \centernot \rightarrow (\tau)^\tau.\]
\end{fact}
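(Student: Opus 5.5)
The plan is to build an explicit colouring $F : [\twoalphalex]^\tau \to 2$ in \s{ZF} with no homogeneous copy. The choices that the proof of Proposition~\ref{trivialunderchoice:prop} gets from \s{AC} will be replaced by definable ordinal invariants, which exist because points of $\twoalpha$ are sets of ordinals.

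First, a reduction. The bit-flip $x \mapsto 1-x$ is an isomorphism from $\twoalphalex$ onto its reverse. Also, $\omega^*\omega \le \tau$ iff $\omega\omega^* \le \tau^*$. So it suffices to treat the case $\omega\omega^* \le \tau$; the other case follows by transporting the colouring.

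Next, the basic invariant. For a lex-increasing sequence $\vec x = \langle x_n : n\in\omega\rangle$ in $\twoalpha$, put $\delta_n = \delta(x_n,x_{n+1})$. I would first show that $\mu(\vec x) \coloneqq \liminf_n \delta_n$, i.e.\ the eventual value of $\min_{m\ge n}\delta_m$, is an ordinal attained infinitely often. The point is that whenever $\delta_{n+1}<\delta_n$, the sequence has permanently committed a $1$ at a lower coordinate. The invariant $\mu$ depends only on the condensation class, since it is insensitive to finite changes. It does change, however, when we pass to a subsequence that skips along coordinates: passing to a sparse subsequence $x_{n_k}$ makes the new splitting ordinals $\min_{n_k\le m<n_{k+1}}\delta_m$. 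I expect to be able to control these and to realise both values of any parity-type property. Symmetrically, there is an invariant $\mu^*$ for decreasing sequences.

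The colouring itself will be read off from the $\omega$-classes $\ccomega(X)$ of a copy $X$. Inside the part of $X$ witnessing $\omega\omega^*$, these classes form an $\omega^*$-indexed family $B_0 > B_1 > \cdots$. The limits $s(B_i)$ also form a descending sequence in $\twoalpha$ (extended by suprema), and their splitting ordinals give further well-ordered data. I would define $F(X)$ from the set of ordinals $\{\mu(B) : B \in \ccomega(X)\}$ together with these $\omega^*$-level splitting data. The rule would be, for example, a parity of the order type of a canonically defined set of ordinals, or a comparison between the least $\mu$-value and the least $\omega^*$-level splitting ordinal. The rule has to be chosen so that thinning out one $\omega$-block, or one step of the $\omega^*$-sequence, flips the value. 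Since all these objects are sets of ordinals, no choice is used.

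The main obstacle is the structural step. Given an arbitrary copy $X$ of a general countable $\tau$, I must produce a subcopy $X' \in [X]^\tau$ with the other colour. To do this I would locate inside $X$ a copy of $\omega\omega^*$ whose blocks are genuine condensation classes of $X$, or at least sit in a way compatible with $\tau$. I then replace that part by a thinned version and check that the result still has type $\tau$. That check needs a lemma: $\tau$ contains a copy of $\omega\omega^*$ which can be thinned, by passing to subsequences of its $\omega$-blocks and to a subsequence of its $\omega^*$-indices, without changing the order type of the whole. I would prove this via the finite condensation and Hausdorff-style analysis of countable orders, in the manner of \cite[\S5]{rosenstein}. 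I would attack this first, since the ordinal bookkeeping for $\mu$ is routine once the right substructure can be isolated canonically enough.
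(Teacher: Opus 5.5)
There is a genuine gap. No colouring is actually specified, and the framework you set up cannot cover the statement as given.

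Your $F(X)$ is read off from $\ccomega(X)$. But the hypothesis admits $\tau=\eta$, and indeed every countable non-scattered $\tau$; a copy of such a $\tau$ has only finite condensation classes (singletons, for $\eta$). It also admits scattered types such as $\zeta\omega^*$, where all infinite condensation classes of $\tau$, and of $\tau^*$, are copies of $\zeta$. For these types $\ccomega(X)=\emptyset$ for every copy $X$, so your $F$ is constant and every copy is homogeneous. The bit-flip reduction does not help with either example.

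The non-scattered case needs a different idea. A countable non-scattered $\tau$ contains $\eta$ and embeds into $\eta$, so $\tau+\tau\le\tau$, which is exactly the hypothesis of Theorem 1 of \cite{higheranalogues}. The paper obtains the Fact by this case split, using Theorem 2 of \cite{higheranalogues} for the scattered case (countable types being well-orderable).

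Even in the scattered case, the step you postpone is the whole difficulty. $F$ must be a function of $X$, so the part of $X$ you thin has to be definable from $X$ and has to survive the thinning. The arguments in this paper (Lemma \ref{indecscatteredkw:lemma}, Theorem \ref{omegaomegastarkw:thm}) achieve this in three ways: Laver's decomposition into finitely many indecomposables; Jullien's strict indecomposability; and colouring only a dense set of copies on which a splitting-level invariant is already minimal. The last device singles out a distinguished interval that is unchanged by the reductions used to flip colours.

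The basic invariant is also wrong. For $x\lex y\lex z$ one has $\delta(x,y)\neq\delta(y,z)$ and $\delta(x,z)=\min\{\delta(x,y),\delta(y,z)\}$. So along a lex-increasing sequence, $\delta(x_n,x_m)=\min_{n\le k<m}\delta_k$. On any tail where all $\delta_k\ge\gamma$, the bits at coordinate $\gamma$ are non-decreasing, so the value $\gamma$ occurs at most once among the $\delta_k$ there. Hence $\min_{m\ge n}\delta_m$ never stabilises; canonised sequences, where $\delta_n$ is strictly increasing, are the simplest example. So $\mu$ can only mean $\sup_n\min_{m\ge n}\delta_m$, which is a limit ordinal.

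That quantity is unchanged by passing to \emph{any} subsequence, because the tail minima of $\langle x_{n_k}\rangle$ are exactly $\min_{m\ge n_k}\delta_m$. This directly contradicts your claim that it "does change" under sparse subsequences. The suprema $s(B)$ of the blocks are likewise unchanged by thinning inside a block. So all the data you propose to use are blind to thinning inside $\omega$-blocks. The only remaining move, deleting whole blocks, is exactly the type-preservation question you have not settled.

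What does move under thinning are finite-position quantities, such as $\delta(x_0,x_1)$ for a canonised block, which strictly increases when $x_0$ is removed. This is the kind of invariant the minimisation arguments above are built on.
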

This is a consequence of Theorems 1 and 2 of \cite{higheranalogues}, corresponding to the case in which $\tau$ is non-scattered and the case in which $\tau$ is scattered, respectively:
\begin{thm*}
    \textnormal{(Theorem 1, \cite{higheranalogues})} Let $\tau \ne 0$ be an order type with $\tau + \tau \le \tau$. Then for any ordinal $\alpha$, \[\twoalphalex\centernot \rightarrow (\tau)^\tau.\]
\end{thm*}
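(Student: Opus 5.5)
The plan is to write down, in \s{ZF}, an explicit colouring $F : [\twoalphalex]^\tau \to 2$ defined only from the lexicographic structure, so that no choice is used in defining it. Then show that every $Y \in [\twoalphalex]^\tau$ contains copies of $\tau$ of both colours. The key definable invariant is the \emph{first splitting level}. For $X \subseteq \twoalpha$ with $|X| \ge 2$, which holds since $\tau + \tau \le \tau$ and $\tau \neq 0$ force $\tau$ to be infinite, let
\[\delta_X = \min\{\delta(x,y) : x \neq y \in X\}.\]
All elements of $X$ agree below $\delta_X$. So $X$ splits as $X = X_0 + X_1$, where $X_i = \{x \in X : x(\delta_X) = i\}$, both pieces are non-empty, and $X_0 \lex X_1$ pointwise. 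I would colour $F(X) = 0$ if $[X_0]^\tau \neq \emptyset$, and $F(X) = 1$ otherwise, possibly refining this with a symmetric condition on $X_1$ if the argument demands it. Since $F$ is defined outright, the only choices made later are finitely many existential instantiations inside a single fixed $Y$, and \s{ZF} permits these.

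Given $Y \in [\twoalphalex]^\tau$, the hypothesis $\tau + \tau \le \tau$ gives $A, B \subseteq Y$ with $A \lex B$ pointwise and $A \cong B \cong \tau$. Iterating once more gives four consecutive blocks $A_1 < A_2 < A_3 < A_4$ of type $\tau$, and any copy of $\tau + \tau$ inside their union that uses two of the blocks is again available. The aim is to locate, among copies of $\tau$ built from two consecutive blocks, one whose splitting point falls between the blocks and one where it does not.

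Compare the splitting levels. For $Z = A \cup B$ of type $\tau+\tau$, inside which we pick a copy $X$ of $\tau$ meeting both halves appropriately, consider whether $\delta_X$ separates $A$ from $B$. If $\delta_{A\cup B}$ is attained between $A$ and $B$, then $X_0 \supseteq A$ embeds $\tau$, so $F(X)=0$. To obtain colour $1$, I would pass to a single block, say $A$, and use the fact that $\delta_A \ge \delta_Y$. Within $A$, either $A_0$ fails to embed $\tau$, in which case $F(A)=1$ directly, or it does embed $\tau$. In the latter case I would recurse one step, looking for a copy whose left part at its first split is small. This is an ordinal-indexed quantity that must stabilise, because the splitting levels $\delta$ can only increase when passing to subcopies, and ordinals are well-founded. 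So I would take a copy $W \in [Y]^\tau$ \emph{minimising} something definable, for instance maximising $\delta_W$ or minimising the rank of the node $x \restriction \delta_W$. Because we minimise an ordinal, this gives a definable extremal witness and avoids dependent choice.

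The main obstacle is exactly this second colour. I must show that a homogeneous $Y$ of colour $0$ is impossible, namely that not every subcopy can have a left half that again embeds $\tau$. The intended way around an $\omega$-descent, which would need \s{DC}, is the extremal choice above: pick $W \in [Y]^\tau$ with $\delta_W$ maximal if such a maximum exists. Otherwise, use that $\sup$ of the splitting levels is bounded by $\alpha$ together with $\tau + \tau \le \tau$ to glue a copy of $\tau$ whose split occurs at a controlled level and whose left part is a proper, non-$\tau$-embedding fragment. If this coarse colouring turns out not to work, the fallback is to colour according to whether the left or right half at $\delta_X$ carries a copy of $\tau$, and to exploit the two-sided freedom given by the four blocks.
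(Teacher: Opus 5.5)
\paragraph{There is a genuine gap: your colouring is constant for natural instances of the theorem.} Take $\tau=\eta$, which satisfies $\tau+\tau\le\tau$. For any $X\in[\twoalphalex]^\eta$, the left half $X_0$ at $\delta_X$ is a nonempty initial segment of an $\eta$-ordered set. It therefore has no least element and is dense, so it has type $\eta$ or $\eta+1$, and $[X_0]^\eta\neq\emptyset$. Hence $F\equiv 0$ and every $Y$ is homogeneous. By symmetry $X_1$ always contains a copy of $\eta$ too, so your fallback (colour by which half carries a copy of $\tau$) is also constant.

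More generally, reading the colour off the order types of the two halves at the first split is too coarse. What must vary between subcopies is ordinal information about splitting levels, not the shape of the cut. Your extremal device also fails here. For $\tau=\eta$ the set $\{\delta_W : W\in[Y]^\eta\}$ never has a maximum, because the left half of any $W$ contains a copy of $\eta$ whose first splitting level is strictly larger. Separately, your colour-$0$ step treats $A\cup B$, which has type $\tau+\tau$, as if it were a copy of $\tau$, and it does not handle the case where $\delta_{A\cup B}$ is not attained between $A$ and $B$.

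\paragraph{What is missing.} You need a colouring that compares splitting levels in different places, and you need well-foundedness of the ordinals to \emph{refute} homogeneity rather than to pick an extremal witness. For $\tau=\eta$, colour $X$ by whether $\delta_{X_0}<\delta_{X_1}$. Suppose $Y$ were homogeneous of colour $0$.
\begin{enumerate}
\item Set $W_0=Y$, and let $W_{n+1}$ and $V_n$ be the left and right halves of $W_n$ at its first split.
\item For any $Z_0\in[W_{n+1}]^\eta$ and $Z_1\in[V_n]^\eta$, the set $Z_0\cup Z_1$ lies in $[Y]^\eta$ and its halves are exactly $Z_0$ and $Z_1$.
\item So homogeneity puts every attainable first splitting level of a copy of $\eta$ inside $W_{n+1}$ below every such level inside $V_n$.
\item The minima of these attainable levels in the $V_n$ therefore form a canonically defined, strictly decreasing $\omega$-sequence of ordinals, which is impossible. (Skip the $n$ with $V_n$ a singleton; these are never consecutive.)
\end{enumerate}
Colour $1$ is handled symmetrically using right halves. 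There you get strict inequalities because these sets of attainable levels have no maximum. For general $\tau$ this is only a model of the kind of argument needed. You would still have to cope with the fact that the halves at the first split need not contain copies of $\tau$ at all, for example $\tau=1+\eta$ with $X_0=\{\min X\}$. Note also that the paper itself only quotes this result from \cite{higheranalogues}, so there is no in-paper proof to follow.
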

\begin{thm*}
    \textnormal{(Theorem 2, \cite{higheranalogues})} Let $\alpha$ be an ordinal and $\tau$ a scattered, well-orderable order type. If $\omega\omega^* \le \tau$ or $\omega^*\omega \le \tau$ then
    \[\twoalphalex \centernot \rightarrow (\tau)^\tau.\]
\end{thm*}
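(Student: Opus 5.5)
The plan is to build, in \s{ZF}, an explicit colouring $F : [\twoalphalex]^\tau \to 2$ with no homogeneous copy. We cannot enumerate $[\twoalphalex]^\tau$ as in Proposition \ref{trivialunderchoice:prop}, so every choice the colouring makes must be read off canonically from the copy $X$. The proof in the case $\omega^*\omega \le \tau$ should be symmetric to the case $\omega\omega^* \le \tau$, by reversing the order; I treat $\omega\omega^* \le \tau$.

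\textbf{Canonical limits and ordinal invariants.} For a lex-monotone $\omega$- or $\omega^*$-sequence $\langle x_n\rangle$ in $\twoalpha$, each coordinate eventually stabilises. This follows by induction on $\beta<\alpha$: once $x_n\restriction\beta$ is eventually constant, the value $x_n(\beta)$ moves monotonically, so it changes at most once. The eventual values therefore define a limit $\ell\in\twoalpha$ with no choice used. Applying this to each $C\in\ccomegastar(X)$ gives a limit $\ell(C)$ and the nondecreasing ordinals $\delta(x,\ell(C))$ for $x\in C$, whose supremum I call $\gamma_C$. The point is that an infinite-exponent copy $X$ comes with well-ordered, choice-free data: the ordinals $\gamma_C$ and the splitting levels $\delta(\ell(C),\ell(C'))$ between limits of distinct classes. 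The colouring will be a parity condition on this data, chosen at a canonically defined location, for instance the class $C$ (or the finite configuration of classes) minimising $\gamma_C$. Such a location is definable because ordinals are well-ordered.

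\textbf{Choice of colour and flipping.} Since $\tau$ is scattered and well-orderable, I would use its Hausdorff/finite-condensation analysis. This yields a canonical way, uniform in the copy, to locate inside any copy of $\tau$ a suborder of type $\omega\omega^*$, that is, an $\omega$-sequence of $\omega^*$-blocks. It also yields a re-embedding $e$ of $\tau$ into itself that fixes everything outside that $\omega\omega^*$-part and, on that part, either thins out a block or shifts the blocks along the $\omega$-sequence. Thinning changes which elements accumulate to $\ell(C)$, and hence changes the parity data at the distinguished block: for example, pass to the subsequence of $C$ along which $\delta(x,\ell(C))$ takes only even-indexed values, or only odd-indexed ones. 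Shifting changes which block is distinguished. I would define $F(X)$ as the parity of an invariant chosen so that, for every $X$, one of these moves produces $X'\in[X]^\tau$ with $F(X')\ne F(X)$. This shows no copy is homogeneous.

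\textbf{Main obstacle.} I expect the main difficulty to be making the flip canonical while guaranteeing that the modified set is still a copy of $\tau$. The invariant must be robust enough to be computed from $X$ alone, yet sensitive enough that a legitimate $\tau$-subcopy changes it. The danger is that thinning a class leaves $\gamma_C$ or $\ell(C)$ unchanged, which would leave the parity unchanged. The first thing I would try is to colour by the parity of the order type of $\{\delta(x,\ell(C)) : x \in C\}$ at the $\gamma$-minimal $\omega^*$-class, modulo finite alterations. If that fails, I would instead record the parity of the position, within the $\omega$-sequence of blocks, of the first block attaining the minimal $\gamma$, and flip it by deleting or shifting one block. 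Here the hypothesis $\omega\omega^*\le\tau$ supplies the room to absorb a missing block.
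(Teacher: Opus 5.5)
There is a genuine gap: the colouring is never specified. The step ``define $F(X)$ as the parity of an invariant chosen so that, for every $X$, one of these moves produces $X'\in[X]^\tau$ with $F(X')\neq F(X)$'' is the whole content of the theorem. Both concrete invariants you float already fail for $\tau=\omega\omega^*$.

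\begin{itemize}
\item \emph{First candidate.} For an $\omega^*$-class $C$, the values $\delta(x,\ell(C))$, $x\in C$, are nondecreasing and cofinal in $\gamma_C$ without reaching it. So every proper initial segment of $\{\delta(x,\ell(C)) : x\in C\}$ is finite, and its order type is always $\omega$, even modulo finite alterations. This colouring is constant.
\item \emph{Second candidate.} Take $\alpha=\omega$. Then $\gamma_C=\omega$ for every $\omega^*$-class, so the first block attaining the minimal $\gamma$ is always at position $0$, in every copy and every subcopy.
\item \emph{Parity of ordinal splitting data in general.} The map $x\mapsto x(0)x(0)x(1)x(1)\cdots$ is a lexicographic embedding of ${}^\omega 2$ into itself that doubles every $\delta$. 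On the image of any copy, and on all its subcopies, every splitting level, every $\delta(x,\ell(C))$ and every $\delta(\ell(C),\ell(C'))$ is therefore even.
\end{itemize}

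As you note yourself, thinning a class changes neither $\ell(C)$ nor $\gamma_C$.

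The surrounding framework also lacks its key mechanism. Order-theoretically there is no canonical $\omega\omega^*$ inside a copy of $\tau$: for $\tau=\omega\omega^*\cdot\zeta$, the shift along $\zeta$ is an automorphism. So the location must be read off from splitting data, and you must then guarantee it is unchanged in the modified copy $X'$. Passing to a subcopy can create a new, ``better'' location, which would undo the flip.

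This is exactly what the devices in this paper's proofs of Lemma~\ref{indecscatteredkw:lemma} and Theorem~\ref{omegaomegastarkw:thm} handle. The distinguished interval is the leftmost canonised convex $\omega$- or $\omega^*$-interval minimising splitting data such as $d_2$ or $d_0$; it is made leftmost by deleting one point from each competing interval. Colourings are defined only on the dense family $\mathscr D$ of copies in which this minimum is already least among all subcopies, so reductions above the distinguished interval leave it fixed. Your sketch has no such stabilisation.

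Finally, the colour should be a comparison of splitting levels whose two outcomes are forced to occur in every copy, not a parity. The available leverage in ${}^\alpha 2$ is as follows.
\begin{itemize}
\item For $x<y<z$ one has $\delta(x,z)=\min\{\delta(x,y),\delta(y,z)\}$ and $\delta(x,y)\neq\delta(y,z)$.
\item Canonised $\omega$-sequences satisfy $\delta(x,y)<\delta(y,z)$, while canonised $\omega^*$-sequences satisfy the reverse.
\item Well-foundedness excludes configurations such as an $\omega$-sequence of $\omega^*$-classes each of whose tails agrees with the limit of the next class below that class's $\gamma$, since this would make the $\gamma$'s strictly decreasing.
\end{itemize}
This is the kind of place where the hypothesis $\omega\omega^*\le\tau$ or $\omega^*\omega\le\tau$ must do real work; your sketch uses it only to ``supply room''.

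A smaller point: at limit stages $x_n\restriction\beta$ need not be eventually constant. For example, take $x_n=0^n1\cdots$ in ${}^{\omega+1}2$ with $x_n(\omega)=n\bmod 2$. So not every coordinate stabilises, and $\ell(C)$ should be taken as the partial branch of length $\gamma_C$. That suffices for how you use it.
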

We show now that Fact \ref{motivation:fact} does not generalise to all linear orders:
\begin{thm}\label{consistency:thm}
    It is consistent with \s{ZF} that there exists a linear order $\L$ such that for any countable $\tau$,
    \[\L \rightarrow (\tau)^\tau.\]
\end{thm}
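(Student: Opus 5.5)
Since \s{ZF} lets us use a strong determinacy-type or symmetric-model hypothesis, the plan is to use a model where a known infinite-exponent partition relation holds on a well-understood set, and then transfer it. First I would find a single structure $\L$ that is universal for countable linear orders in a way that interacts well with the colourings. The natural choice is $\eta$-like. I would take a model of \s{ZF} in which $\omega \rightarrow (\omega)^\omega$ holds, for instance Solovay's model or a model of \s{AD}, where the relation holds even for arbitrarily many colours in a suitable sense. I would then set $L = [\omega]^\omega$, or rather the set $\mathcal{L}$ of all linear orders on $\omega$ modulo the relevant identification. More concretely, I would look for an order $\L$ whose copies of any countable $\tau$ can be coded by infinite subsets of $\omega$. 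Then a colouring $F : [\L]^\tau \to 2$ induces a colouring of $[\omega]^\omega$, and a homogeneous infinite set for that induced colouring yields a homogeneous copy of $\tau$.

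The construction I would commit to is as follows. Because sets of reals cannot work (Section~\ref{choice:section} will show elements must be sets of sets of ordinals), I would instead build $\L$ in a symmetric extension. Add a generic copy of $\eta$, namely a countable dense order $L$ of Cohen-generic reals (or generic subsets), together with its order. Take the symmetric submodel generated by finite-support automorphisms that act as order-automorphisms of $\eta$, in the style of the Mostowski ordered model. In this model $L$ carries a dense order but no well-order, so $\L$ is universal for countable orders via $\eta$.

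Given $F : [\L]^\tau \to 2$ in the symmetric model, $F$ is supported by a finite set $E \subseteq L$. Fix $X \in [\L]^\tau$ avoiding $E$ and lying entirely inside one interval determined by $E$; by $\eta$-homogeneity (every countable $\tau$ embeds into any open interval of $\eta$) such an $X$ exists. The key claim is that for two copies $X, X'$ of $\tau$ in the same $E$-interval, there is an automorphism fixing $E$ pointwise and moving $X$ to $X'$. This is possible only if the automorphism group is large enough, so I would use the full group of order-automorphisms of $\eta$ fixing finite sets, together with a forcing name for $L$ that makes these automorphisms act. Given the claim, symmetry forces $F(X) = F(X')$, so any copy of $\tau$ inside a single $E$-interval $I$ is homogeneous, because all its subcopies also lie in $I$.

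The main obstacle is exactly this homogeneity-of-copies step. Countable copies of $\tau$ in $\eta$ are not automorphism-related in general, since the gap structure (cuts filled or not, endpoints) differs. I would handle this by restricting to a class of well-behaved copies, such as copies whose closure avoids irrational cuts in a fixed pattern. I would choose a single $X_0$ inside $I$ and then find a copy $H \subseteq I$ of $\tau$ all of whose subcopies of type $\tau$ are automorphic images of $X_0$ over $E$. If this fails for arbitrary countable $\tau$, the fallback is to replace $\eta$ by a richer homogeneous structure. For example, I would use the order $\lessomegalex$ with tree structure, or sets of sets of ordinals as the paper hints, so that every embedding of a countable order extends to an automorphism over finite supports. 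I expect ensuring that $F$ itself (not merely its restriction) is invariant under these possibly infinite-support-moving automorphisms to require the support to be genuinely finite and the filter to be normal.
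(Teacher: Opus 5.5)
Your construction breaks down before the homogeneity issue you flag even arises. With finite supports, $L$ is Dedekind-finite in the symmetric model. Suppose $f:\omega\to L$ is an injection there with finite support $E$. Each $f(n)$ is fixed by every automorphism in $\fix(E)$. But any point of $L\setminus E$ lies in an $E$-interval ordered as $\eta$, and is moved by some order-automorphism fixing $E$ pointwise. (In the forcing version, run this with a condition $p$ and an automorphism that also fixes the other finitely many coordinates mentioned in $p$, so that $p$ and $\pi p$ are compatible.) Hence $\operatorname{ran} f\subseteq E$, which is absurd; this is the familiar fact that in Mostowski's ordered model every set of atoms is a finite union of intervals and points.

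So no infinite countable $\tau$ has a copy in $L$ inside the model. The embedding into an interval of $\eta$ that you invoke exists only in the outer model. Consequently $\L\rightarrow(\tau)^\tau$ fails for lack of any candidate homogeneous set, and the paper only considers the relation when $[\L]^\tau\neq\emptyset$.

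Nor can you just pass to countable supports while keeping $L$ countable. Then $L$ itself is a support, $\fix(L)$ is trivial, every set is symmetric, and you are in a model of choice where Proposition \ref{trivialunderchoice:prop} applies. Your closing worry is also backwards: invariance of $F$ under $\fix(E)$ holds for any support $E$, finite or not. What the argument actually needs is that the countable copies of $\tau$, and their isomorphisms with $\tau$, belong to the model.

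The missing idea is to take $L$ of size $\aleph_1$, use countable supports, and choose the order so that countable partial isomorphisms extend to automorphisms. The paper works first in \s{ZFA}. It starts from a model of \s{ZFCA}$+$\s{CH} whose $\aleph_1$ atoms carry a saturated linear order $\prec$, and takes the group of all $\prec$-automorphisms with the normal filter generated by pointwise stabilisers of countable sets. Every countable set of atoms is then in the permutation model $N$, and $N$ agrees with the outer model on $[H]^\tau$ for countable $H$.

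Given $F$ with countable support $E$, $\omega_1$-saturation yields an interval $I$ disjoint from $E$. For any $H\in[I]^\tau$ and $H'\in[H]^\tau$, the identity on $E$ together with an isomorphism $H\to H'$ is a countable partial automorphism, since $H$ and $H'$ lie in one cut of $E$. By back-and-forth, using saturation of size $\aleph_1$, it extends to some $\pi\in\fix(E)$, so $F(H')=F(H)$. Saturation thus removes the cut/gap obstruction outright, with no restriction to well-behaved copies; that is the step your proposal leaves open.

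Only afterwards is the result transferred to \s{ZF} by Jech--Sochor, using that the relation is determined by $\mathcal P^3(L)$. That transfer is why the final $L$ consists of sets of sets of ordinals. This form plays no role in the homogeneity argument itself.
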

We first prove the consistency of this statement with \s{ZFA} by means of permutation models, and then show that the result lifts to a \s{ZF} result using symmetric systems.\footnote{\s{ZF(C)A} is the theory of \emph{Zermelo-Fraenkel set theory with atoms}; see \cite[\S4]{jechaxiomofchoice} for a detailed treatment of this theory and of the theory of permutation models.}
\begin{lemma}\label{zfa:lemma}
    It is consistent with \s{ZFA} that there exists a linear order $\L$ such that for any countable $\tau$,
    \[\L \rightarrow (\tau)^\tau.\]
\end{lemma}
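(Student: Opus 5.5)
We will use a permutation model of the same kind as the basic Fraenkel--Mostowski model. Take a ZFA ground model (say of ZFAC) whose set of atoms $A$ is indexed by $\mathbb{Q}$ and carries the order $<$ of type $\eta$. Let $\mathcal{G}$ be the group of order-automorphisms of $(A,<)$, and let the normal filter be generated by pointwise stabilisers of finite sets $E \subseteq A$. This is the ordered Mostowski model $\mathcal{N}$. The order $<$ on $A$ is symmetric (fixed by every $\pi\in\mathcal{G}$), so $\L = (A,<)$ lies in $\mathcal{N}$, and we will show $\L\rightarrow(\tau)^\tau$ in $\mathcal{N}$ for every countable $\tau$. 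Countable order types in $\mathcal{N}$ are those of the kernel, since $\omega$ and its subsets are pure sets. Also every countable $\tau$ embeds in $\eta$, so $[\L]^\tau\neq\emptyset$.

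Let $F:[\L]^\tau\to 2$ be in $\mathcal{N}$ with support $E$, a finite set of atoms $e_1<\dots<e_n$. These points cut $A$ into $n+1$ open intervals $I_0,\dots,I_n$, each ordered as $\eta$. Every $\pi$ fixing $E$ pointwise satisfies $F(\pi X) = F(X)$. The plan is to pick a single interval, say $I_0$, which contains a copy of $\tau$ since $\tau\le\eta$, and take $H\in[I_0]^\tau$. We then check that $F$ is constant on $[H]^\tau$. For this it suffices that any two copies $X,Y\in[I_0]^\tau$ are related by an automorphism of $A$ fixing $E$ pointwise, because then $F(X)=F(Y)$.

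This is the main obstacle. Two subsets of $\eta$ both of type $\tau$ need not be conjugate under $\mathrm{Aut}(\eta)$, because the cut structure differs. For example, a copy of $\omega$ may have a supremum in $A$ or converge to an irrational cut. Fix an enumeration in which the copies are countable subsets. Then $X$ and $Y$ are $\mathrm{Aut}$-conjugate iff the order-isomorphism $X\to Y$ extends, which happens iff it preserves the types of cuts: whether each gap of the Dedekind completion of $X$ is filled by points of $A$ or not, and whether the limit point lies in $X$.

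We can repair this by choosing $H$ carefully and using monotonicity in $B$. Pick $H$ with all its cuts of one uniform type, for instance a copy of $\tau$ that is "dense-free": every non-trivial gap of $H$, and every accumulation cut of $H$, is filled by an interval of atoms. We can take $H$ to be a copy such that between any two cuts there is an open rational interval avoiding $H$. Concretely, realise $\tau$ inside a Cantor-set-like discrete configuration, with $H\subseteq I_0$ a suborder of a copy of $\tau\cdot 2$ with intervals inserted. We must then check that any $X\in[H]^\tau$ inherits the same property, so its cut types are determined by its order type. Here we may need to pass to a relativised argument. Alternatively, we can instead change the atoms to be a structure with more homogeneity, for example taking $A$ to be ordered as $\eta$ but with $\mathcal{G}$ consisting of automorphisms of the Dedekind completion, and replacing the atoms by $\mathbb{R}$. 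Then any two countable subsets of the same order type with no endpoints issues are conjugate by a homeomorphism-type back-and-forth argument on $\mathbb{R}$ that fixes $E$.

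If direct conjugacy fails, we will try a second approach. Show that $F$ restricted to $[I_0]^\tau$ depends only on the cut-type pattern of $X$, then choose $H$ inside which all copies of $\tau$ have the same pattern. In either approach, the back-and-forth extension of $X\to Y$ to an automorphism of $I_0$ fixing its endpoints is the technical heart of the argument.
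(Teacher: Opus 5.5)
Your overall strategy (find an interval avoiding the support of $F$, take $H$ inside it, and show that any two copies of $\tau$ in $H$ are conjugate by an automorphism fixing the support) is the paper's. However, your choice of model cannot work, and the reason comes before the conjugacy problem you flag: with finite supports, $\mathcal N$ contains no copy of any infinite countable $\tau$. Suppose an injection $f$ from a pure set (such as $\omega$, or a pure representative of $\tau$) into $A$ lies in $\mathcal N$ with finite support $E$. Then every $\pi\in\fix(E)$ satisfies $\pi(f)=f$ and fixes the pure domain, so $\pi(f(n))=f(n)$ for all $n$. Since $\fix(E)$ moves every atom outside $E$, the range of $f$ is contained in $E$. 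Thus $A$ is Dedekind-finite in the ordered Mostowski model; indeed every subset of $A$ in $\mathcal N$ is a finite union of intervals and points. Hence $[\langle A,<\rangle]^\tau=\emptyset$ in $\mathcal N$ for every infinite countable $\tau$, and the relation fails trivially. Your $H\in[I_0]^\tau$ exists only in the ground model. The step ``$\tau\le\eta$, so $[\langle A,<\rangle]^\tau\neq\emptyset$'' conflates the two models, and the same objection applies verbatim to your variant with atoms indexed by $\mathbb R$. To have copies of $\tau$ in the permutation model, the normal filter must be generated by $\fix(E)$ for \emph{countable} $E$. Then every countable set of atoms, together with its enumeration, is symmetric, and the ground model and $\mathcal N$ agree on $[H]^\tau$. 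But then the set of atoms must be uncountable: otherwise $E=A$ is allowed and $\mathcal N$ is the ground model, where \s{AC} holds. Moreover, $\fix(E)$ must still act richly outside $E$.

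Second, even in the right model, the conjugacy issue you correctly identify is left open. In $\mathbb Q$ or $\mathbb R$, two copies of $\tau$ inside $H$ typically have different cut patterns. Your claimed back-and-forth on $\mathbb R$ is false: $\mathbb Q\cap(0,1)$ and $\mathbb Q\cap\bigl((0,1)\cup(2,3)\bigr)$ both have type $\eta$, but no order-automorphism of $\mathbb R$ maps one onto the other. The ``dense-free'' and ``cut-type pattern'' suggestions are not carried out. The missing idea is to choose the atoms so that the issue disappears. Work in a model of \s{ZFCA}$+$\s{CH} whose atoms carry a \emph{saturated} dense linear order $\prec$ of size $\aleph_1$, with $\mathscr G$ its automorphism group and countable supports. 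Then $\omega_1$-saturation yields an interval $I$ disjoint from the countable support $E$ of $F$. Take $H\in[I]^\tau$ and any $H'\in[H]^\tau$. The map fixing $E$ pointwise and sending $H$ onto $H'$ order-preservingly is a countable partial isomorphism, since the convex hull of $H$ misses $E$. By homogeneity of saturated models it extends to some $\pi\in\fix(E)$, so $F(H')=F(H)$ with no analysis of cuts at all.
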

\begin{proof}
    Let $M$ be a model of \s{ZFCA + CH} whose set of atoms $A$ has cardinality $\aleph_1$, and let $\prec\,\, \subseteq A \times\! A$ be such that $\Aprec$ is a saturated linear order (this is possible as $M \models 2^{\aleph_0} = \aleph_1$: see \cite[Corollary 4.3.13]{marker}). Let $\mathscr G \le \textnormal{Aut}(A)$ be the group of all permutations of $A$ which preserve the order $\prec$, i.e.\ the group of all order-automorphisms of $\Aprec$, and let $\mathscr F \subseteq \mathcal P(\mathscr G)$ be the normal filter generated by subgroups of $\mathscr G$ which fix countable subsets of $A$. Write $N \subseteq M$ for the permutation model generated by $\mathscr G$ and $\mathscr F$. It is clear that $\Aprec \in N$; observe that all countable subsets of $A$ in $M$ are also elements of $N$, as for $E \subseteq A$ countable, $\fix(E) \le \sym(E)$, so $\sym(E) \in \mathscr F$. We claim that in $N$, the order $\Aprec$ has the desired property.
    
    Let $\tau$ be a countable order type and let $F : [\Aprec]^\tau \to 2$ be a colouring in $N$. Since $F$ is symmetric, it has some support $E \subseteq A$, a countable set of atoms. Then by the $\omega_1$-saturation of $\Aprec$ in $M$, we can find (in $M$) an interval $I$ of $\Aprec$ such that $I \cap E = \emptyset$. Then let $H \in [I]^\tau$ be arbitrary; by the above, $H \in N$, and $M$ and $N$ agree on $[H]^\tau$. We claim that $H$ is homogeneous for $F$. Let $H' \in [H]^\tau$, and let $i : E \cup H \to E \cup H'$ be a partial automorphism of $A$ which fixes every element of $E$ and which maps $H$ to $H'$ order-preservingly, noting that such an $i$ exists as the convex hull of $H$ is disjoint from $E$. Since $\Aprec$ is saturated, $i$ can be extended to a full order-automorphism, $\pi$, say; treating $\pi$ as an automorphism of $N$, we have $\pi(F)(\pi(H)) = F(H)$, but $\pi(F) = F$ as $\pi \in \fix(E)$ and $\pi(H) = H'$ by construction, so $F(H') = F(H)$. Since $H' \in [H]^\tau$ was arbitrary, it follows that $H$ is homogeneous for $F$.
\end{proof}
\begin{proof}[Proof of Theorem \ref{consistency:thm}]
    For $\L$ a linear order, $\tau$ an order type, the statement
    \[\L \rightarrow (\tau)^\tau\]
    is determined by $\mathcal P^3(L)$, as $<\,\, \in \mathcal P^3(L)$, any $H \subseteq L$ is of course an element of $\mathcal P(L)$, and any colouring $F : [\L]^\tau \to 2$ can be encoded as a set of subsets of $L$, i.e.\ an element of $\mathcal P^2(L)$.
    
    Now, let $M$, $\Aprec$, and $N$ be as in the proof of Lemma \ref{zfa:lemma}, and let $V \subseteq M$ be the kernel of $M$, i.e.\ the class of hereditary sets, so $V \models \s{ZFC}$. Then by the Jech-Sochor theorem (cf.\ \cite[Theorem 6.1]{jechaxiomofchoice}), there is a symmetric extension $V(G) \models \s{ZF}$ of $V$ and a set $\bar{A} \in V(G)$ such that
    \[\left(\mathcal P^3(A)\right)^N\text{ is }\in\!\text{-isomorphic to }\left(\mathcal P^3(\bar{A})\right)^{V(G)}.\]
    It follows that in $V(G)$, there is an ordering $\prec' \,\,\subseteq\bar{A}\times\!\bar{A}$ such that for all countable $\tau$
    \[\langle \bar{A},\prec'\rangle \rightarrow (\tau)^\tau.\qedhere\]
\end{proof}
The Jech-Sochor theorem uses sets of sets of ordinals in a generic extension to mimic the role of atoms in \s{ZFA}, so the order $\langle \bar{A},\prec'\rangle$ built above consists of sets of sets of ordinals. The question of whether this is necessary or whether one can consistently find an $\L$ with the same property such that $L$ is a set of ordinals inspired the work of the next section.

We remark here that an almost identical proof to the above yields the following result also:\footnote{We modify the proof of Lemma \ref{zfa:lemma} in the following way. Start with  a model of \s{ZFCA + CH} whose set of atoms $A$ can be equipped with an edge relation making it a saturated graph; where in that proof we found an interval $I$ disjoint from the countable support set $E$, in this proof we appeal to $\omega_1$-saturation to find a copy of $R$ disjoint from $E$.}
\begin{prop}\label{graphconsistency:prop}
    It is consistent with \s{ZF} that there exists a graph $G$ such that for any countable graph $\Gamma$,
    \[G \rightarrow (\Gamma)^\Gamma.\]
\end{prop}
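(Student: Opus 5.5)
We follow the two-stage scheme of Lemma \ref{zfa:lemma} and Theorem \ref{consistency:thm}: first obtain a permutation model of \s{ZFA} in which the set of atoms carries a graph with the required property, then transfer to \s{ZF} with the Jech-Sochor theorem.

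\textbf{The \s{ZFA} model.} Start with a model $M$ of \s{ZFCA + CH} whose set of atoms $A$ has cardinality $\aleph_1$. Fix an edge relation $R \subseteq A \times A$ making $\langle A, R\rangle$ a saturated graph. Such a graph exists because $2^{\aleph_0} = \aleph_1$ and the theory of the random graph is countable and complete, so it has a saturated model of size $\aleph_1$ (cf.\ \cite[Corollary 4.3.13]{marker}).

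Let $\mathscr G$ be the group of graph automorphisms of $\langle A,R\rangle$, and let $\mathscr F$ be the normal filter generated by the pointwise stabilisers $\fix(E)$ for $E \subseteq A$ countable. Let $N$ be the resulting permutation model. As before, $\langle A,R\rangle \in N$ and every countable subset of $A$ in $M$ lies in $N$. Consequently, for a countable $H \subseteq A$, the sets $[H]^\Gamma$ computed in $M$ and in $N$ coincide.

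\textbf{Homogeneity in $N$.} Fix a countable graph $\Gamma$ and a colouring $F : [\langle A,R\rangle]^\Gamma \to 2$ in $N$, with countable support $E$. Every countable graph embeds into the random graph. Working in $M$ and using $\omega_1$-saturation over the parameter set $E$, realise the type asserting that a countable set of new vertices, disjoint from $E$, carries a copy of $\Gamma$. This gives some $H \in [A]^\Gamma$ with $H \cap E = \emptyset$.

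Now let $H' \in [H]^\Gamma$ and fix an isomorphism $H \to H'$. Extend it by the identity on $E$ to a map $i : E \cup H \to E \cup H'$. The key check is that $i$ is a partial isomorphism. It is an isomorphism on $H$ and on $E$, but we must also control the edges between $E$ and $H$ versus those between $E$ and $H'$. This is the main obstacle: in the linear-order case, convexity of the interval $I$ made this automatic, but here the edges to $E$ are not automatically preserved.

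To handle this, strengthen the choice of $H$: realise it so that every vertex of $H$ has no edges to $E$. This is a consistent countable type over $E$ in the random graph, since the extension axioms allow prescribing adjacency to finitely many points of $E$, and compactness handles the rest. Any $H' \subseteq H$ inherits the same property, so $i$ is a partial isomorphism between countable subsets. By saturation (back-and-forth for $\aleph_1$-saturated models of size $\aleph_1$), $i$ extends to an automorphism $\pi \in \fix(E)$. Then
\[F(H') = \pi(F)(\pi(H)) = F(H),\]
so $H$ is homogeneous for $F$.

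\textbf{Transfer to \s{ZF}.} The relation $G \rightarrow (\Gamma)^\Gamma$ is determined by $\mathcal P^3(A)$: the edge relation is an element of it, copies of $\Gamma$ are subsets of $A$, and colourings can be coded as sets of subsets of $A$. By the Jech-Sochor theorem there is a symmetric extension $V(G) \models \s{ZF}$ of the kernel of $M$ containing a set $\bar A$ with $\mathcal P^3(\bar A)^{V(G)}$ $\in$-isomorphic to $\mathcal P^3(A)^N$. Hence $\bar A$, equipped with the image of $R$, is a graph satisfying $G \rightarrow (\Gamma)^\Gamma$ for every countable graph $\Gamma$, which gives the consistency of the statement with \s{ZF}.
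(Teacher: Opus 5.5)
Your proof is correct and is the argument the paper sketches in its footnote: a saturated graph on $\aleph_1$ atoms under \s{CH}, the permutation model with countable supports, $\omega_1$-saturation to find a copy disjoint from the support $E$, extension of a partial isomorphism fixing $E$ to an automorphism, and then the Jech--Sochor transfer as in Theorem \ref{consistency:thm}. Your extra requirement that $H$ have no edges to $E$ is the graph analogue of the convexity of $I$ in Lemma \ref{zfa:lemma}. It supplies a detail that the paper's sketch, which only asks for a copy of the random graph disjoint from $E$, leaves implicit, and without it the map fixing $E$ need not be a partial isomorphism.
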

\section{The failure of weak choice principles}\label{choice:section}
The particular fragments of choice with which we concern ourselves in this paper are \s{KWP}$_1$ and \s{O}, which we introduce below:
\begin{defn}
    The \emph{Kinna-Wagner Selection Principle} \s{KWP}$_1$ is the statement that every set injects into the power set of an ordinal,
    \[\forall A \,\,\exists \alpha \in \text{Ord} \,\,\exists f : A \to \mathcal P(\alpha)\text{ injective.}\]
\end{defn}
Some comments: this principle was introduced in \cite{kinnawagner}, originally as a different statement which was shown equivalent to the formulation given above. More generally one can study the principle \s{KWP}$_\beta$ for $\beta$ an ordinal, which is the statement that $\forall A \,\,\exists \alpha \in \text{Ord} \,\,\exists f : A \to \mathcal P^\beta(\alpha)\text{ injective,}$ so the Axiom of Choice is \s{KWP}$_0$. The statement \s{KWP} then denotes the statement that \s{KWP}$_\beta$ holds for some $\beta$; see \cite{kwpasafjonathan}. In this paper we concern ourselves only with \s{KWP}$_1$.

\begin{defn}
    The \emph{Ordering Principle} \s{O} is the statement that every set can be linearly ordered, i.e.\ for any $A$ there is a relation $<\,\,\, \subseteq A \times A$ which is transitive, irreflexive, and trichotomous on $A$.
\end{defn}

\begin{obs}
    \[\s{KWP}_1 \implies \s{O}.\]
\end{obs}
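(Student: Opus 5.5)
The plan is to fix an arbitrary set $A$ and, using $\s{KWP}_1$, choose an ordinal $\alpha$ and an injection $f : A \to \mathcal P(\alpha)$. It then suffices to linearly order $\mathcal P(\alpha)$ in \s{ZF} (that is, without any choice) and pull this order back along $f$.

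Concretely, I will identify each $X \subseteq \alpha$ with its characteristic function $\chi_X \in \twoalpha$ and order $\mathcal P(\alpha)$ lexicographically. Thus $X \lex Y$ holds iff, for the least $\delta < \alpha$ at which $\chi_X$ and $\chi_Y$ differ, we have $\delta \in Y \setminus X$. This least $\delta$ exists whenever $X \neq Y$ because $\alpha$ is well-ordered. The order is defined outright, so no choice is used. I will then verify that it is irreflexive, transitive and trichotomous. Irreflexivity and trichotomy are immediate from the existence of $\delta$ for distinct sets. Transitivity is the standard argument: if $X \lex Y \lex Z$ with witnesses $\delta_1$ and $\delta_2$, then the first disagreement between $X$ and $Z$ occurs at $\min(\delta_1,\delta_2)$, and it has the correct orientation in each case ($\delta_1<\delta_2$, $\delta_1>\delta_2$, and $\delta_1=\delta_2$ is impossible).

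Finally, for $a, b \in A$ I will set $a < b$ iff $f(a) \lex f(b)$. Since $f$ is injective, this relation inherits irreflexivity, transitivity and trichotomy from $\lex$, so it is a linear order on $A$. No step is a real obstacle. The only point requiring care is that everything is explicitly definable from $f$ and $\alpha$, so the argument is valid in \s{ZF}.
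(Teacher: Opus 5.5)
Your proposal is correct and is essentially the paper's proof: pull back the lexicographic order on $\twoalpha$ along the injection provided by $\s{KWP}_1$. Your explicit check that $\lex$ is irreflexive, transitive and trichotomous, and that the construction uses no choice, fills in details the paper leaves implicit.
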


\begin{proof}
    Assume \s{KWP}$_1$, and let $A$ be a set. Then there is an ordinal $\alpha$ and an injection $i : A \hookrightarrow \twoalpha$. It follows that the relation $<$ defined on $A$ by, for $a, b \in A$,
    \[a < b \iff i(a)\lex i(b)\]
    is a linear order on $A$.
\end{proof}

The reverse implication does not hold; see \cite{felgnerordering} or \cite{pincusfm}.

We now return to the structural IEPRs shown consistent in \S\ref{consistency:section} and show that the existence of a structure satisfying such a partition relation yields a failure of one of these choice principles. For the purposes of the proof it will be easier to phrase our results contrapositively, i.e.\ in the form ``if the structure $A$ does not witness the failure of some choice principle, then $A \centernot \rightarrow (C)^C$". In order to build our colourings with no homogeneous sets, we make use of the following concept from \cite{higheranalogues}:

\begin{defn}
    Let $A$, $C$ be structures with $[A]^C \neq \emptyset$. A set $\D \subseteq [A]^C$ is \emph{dense} if for any $X \in [A]^C$, there is some $X' \in [X]^C$ with $X' \in \D$.
\end{defn}
A colouring of some dense $\D$ with no homogeneous set extends to a colouring of the entirety of $[A]^C$ with no homogeneous set, e.g.\ by just sending every element of $[A]^C\setminus \D$ to $0$.
\begin{obs}
    Let $A$, $C$, be structures, let $\D \subseteq [A]^C \neq \emptyset$ be dense, and let $F : \D \to 2$ have no homogeneous set, in the sense that for any $X \in \D$, there is some $X' \in [X]^C \cap \D$ with $F(X) \neq F(X')$. Then $F$ extends to a colouring $F' : [A]^C \to 2$ witnessing $A \centernot \rightarrow (C)^C$. \qed
\end{obs}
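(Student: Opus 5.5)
The plan is to define $F'$ explicitly by extending $F$ by a constant, and then check homogeneity directly using density. Set
\[F'(X) \coloneqq \begin{cases*}
    F(X) & if $X \in \D$;\\
    0 & if $X \in [A]^C \setminus \D$.
\end{cases*}\]
This is a definable extension, so no choice is used; $F'$ exists in \s{ZF} since $\D$ is a subset of $[A]^C$ given as a parameter.

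Now suppose, towards a contradiction, that some $H \in [A]^C$ is homogeneous for $F'$, i.e.\ $F'$ is constant on $[H]^C$. The first step is to use density of $\D$ to find some $X \in [H]^C \cap \D$. The second step is to apply the hypothesis on $F$ to this $X$, which gives some $X' \in [X]^C \cap \D$ with $F(X) \neq F(X')$.

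The third step, which is where the only real care is needed, is to check that $X' \in [H]^C$. This requires transitivity of the copy relation: $X'$ is a substructure of $X$, which is a substructure of $H$, so $X'$ is a substructure of $H$. Moreover, isomorphism to $C$ is a property of $X'$ alone, independent of the ambient structure. Hence $X' \in [H]^C$.

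Since both $X$ and $X'$ lie in $\D$, we have $F'(X) = F(X) \neq F(X') = F'(X')$, and both are elements of $[H]^C$. This contradicts the homogeneity of $H$. Hence no $H \in [A]^C$ is homogeneous for $F'$, so $F'$ witnesses $A \centernot\rightarrow (C)^C$. I expect no genuine obstacle here; the argument is routine once transitivity of the copy relation is noted.
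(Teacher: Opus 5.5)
Your proof is correct and matches the paper's approach. The paper extends $F$ by sending every element of $[A]^C \setminus \D$ to $0$, and leaves the verification implicit; your argument via density of $\D$ and transitivity of the copy relation is exactly that omitted verification.
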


\begin{thm}\label{countablenonscattered:thm}
    Let $\tau$ be a countable non-scattered order type, and let $\L$ be a linear order. If the set $L$ injects into the power set of an ordinal, then
    \[\L \centernot \rightarrow (\tau)^\tau.\]
\end{thm}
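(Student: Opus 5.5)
The plan is to exploit the extra structure that an injection $i : L \hookrightarrow \twoalpha$ gives us. We cannot assume $i$ is order-preserving, so Theorem 1 of \cite{higheranalogues} does not apply directly. However, every $X \subseteq L$ inherits canonical ordinal-valued invariants from $i$, and these let us make ``definable choices'' among copies of $\tau$ without any appeal to \s{AC}.

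First, two preliminary reductions. Since $\tau$ is countable and non-scattered, $\eta \le \tau$. Since every countable order embeds into $\eta$, we also have $\tau \le \eta$ and $\tau + \tau \le \tau$. In particular, any copy of $\eta$ inside $\L$ contains copies of $\tau$, and conversely.

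The central invariant is the splitting set. For $X \in [\L]^\tau$, let
\[S(X) = \{\delta(i(x),i(y)) : x \neq y \in X\} \subseteq \alpha.\]
This is a countable set of ordinals, so it has an order type $\gamma(X) < \omega_1$ and a least element $\delta(X)$, all defined without choice. The level $\delta(X)$ splits $X$ into the two pieces $X_0, X_1$ of points whose images take value $0$, respectively $1$, at $\delta(X)$. Passing to a subcopy $X' \in [X]^\tau$ can only shrink $S$, so $X \mapsto S(X)$ is monotone. The colouring will be defined from these invariants; concretely, I would first try to colour $X$ by the parity of the finite part $n$ in $\gamma(X) = \lambda + n$, possibly restricted to a suitable dense set $\D$ as in the observation on dense sets.

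By that observation, it then suffices to show that every $X$ in $\D$ has a subcopy in $\D$ whose invariant has the other parity. The key combinatorial step is the following: if $Q \subseteq X$ is a copy of $\eta$ and $Q = Q_0 \cup Q_1$, then some $Q_j$ is dense in some interval of $Q$. Hence $Q_j$ contains a copy of $\eta$, and therefore a copy of $\tau$ lying entirely on one side of $\delta(X)$. Iterating this shows that we can always discard the least splitting levels: from $X$ we find subcopies whose splitting set is $S(X)$ with an initial segment removed. To adjust the parity in the other direction, we combine two such copies using $\tau + \tau \le \tau$, placing one copy inside $Q_0$ and one inside $Q_1$. In this way we recover a copy of $\tau$ that still splits at a prescribed level.

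The main obstacle is to control $\gamma(X')$ exactly, rather than merely to decrease it. Removing the bottom level of $S(X)$ need not change the finite part of the order type predictably, because the remaining splitting levels of a subcopy can collapse in uncountably many ways. If the parity-of-$\gamma$ colouring cannot be controlled, I would fall back on one of two alternatives. The first is to restrict to a dense set $\D$ of ``tidy'' copies, say those in which every point of $S(X)$ is realised by a copy of $\eta$ on both sides, and colour these by parity. The second is to run the alternation idea from the proof of Proposition \ref{trivialunderchoice:prop} definably. Instead of a well-ordering of $[\L]^\tau$, this version would use the canonical ordinal data $S(X)$, together with the lexicographic order on $i[X]$, to select descending chains; that data is precisely what the injection into $\mathcal P(\alpha)$ provides.
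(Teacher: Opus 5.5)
Your proposal never actually produces a colouring shown to have no homogeneous set, and the one you put forward fails. Suppose $i$ maps $L$ into ${}^\omega 2$ (e.g.\ $L=\mathbb R$). Then every infinite $X\subseteq L$ has infinite $S(X)$: if $S(X)\subseteq\{0,\dots,m\}$, then $x\mapsto i(x)\restriction(m+1)$ is injective on $X$. So $S(X)$ is an infinite subset of $\omega$, and $\gamma(X)=\omega$ for every $X\in[\L]^\tau$. The parity colouring is therefore constant, and restricting it to ``tidy'' copies changes nothing.

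The underlying problem is that $S(X)$ depends only on the set $i\im X$. It ignores how $<$ on $X$ sits relative to $\lex$ on $i\im X$, and that interaction is the whole difficulty: if $i$ were order-preserving, you could just cite Theorem 1 of \cite{higheranalogues}.

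Several intermediate steps also break down:
\begin{itemize}
\item Passing to a subcopy on one side of $\delta(X)$ can delete levels anywhere in $S(X)$, not just an initial segment.
\item Only one of $Q_0,Q_1$ is guaranteed to contain a copy of $\eta$. Even when both do, you have no control over their relative $<$-positions, so $\tau+\tau\le\tau$ cannot be used to glue them.
\item The ``definable alternation'' fallback needs a well-ordering of $[\L]^\tau$ and choices of descending chains. An injection $L\hookrightarrow\mathcal P(\alpha)$ provides neither; already $\mathbb R$ need not be well-orderable in \s{ZF}. And $S(X)$ is far from injective, so it cannot serve as an index.
\end{itemize}

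The missing ingredient is a canonisation result for a second linear order on $\Q$, Lemma \ref{canonisationonq:lemma}. It says that for any linear order $\prec$ on $\Q$ there is $A\in[\Qq]^\eta$ on which $\prec$ has type $\omega$ or $\omega^*$, or agrees with $<$, or reverses it. It is proved by a back-and-forth construction combined with Halpern--L\"auchli on $\lessomegatree$.

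Apply it with $\prec\,=i^{-1}(\lex)$ to a copy of $\eta$ inside an arbitrary $A\in[\L]^\tau$. This shows that $\D=\D_0\cup\D_1$ is dense, where $\D_0$ consists of the copies $A$ with $i\im A$ of $\lex$-type $\omega$ or $\omega^*$, and $\D_1$ of those on which $i$ is order-preserving or order-reversing. Both classes are closed under passing to subcopies.

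You then colour each class separately:
\begin{itemize}
\item On $\D_1$, pull back the colouring of Theorem 1 of \cite{higheranalogues}. It applies to both $\tau$ and $\tau^*$, since $\tau+\tau\le\tau$.
\item On $\D_0$, colour $A$ by whether $a_0>a_1$ in $L$, where $a_0,a_1$ are the first two elements of $A$ in the enumeration induced by the $\omega$- or $\omega^*$-ordering of $i\im A$. Non-scatteredness of $\tau$ lets you rebuild inside $A$ a copy of $\tau$ whose first two enumerated elements are a chosen pair $a_n,a_m$ with $n<m$, in either $<$-order. So no element of $\D_0$ is homogeneous.
\end{itemize}
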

Equivalently, if the linear order $\L$ witnesses $\L \rightarrow (\tau)^\tau$ for $\tau$ some countable non-scattered order type, then the set $L$ witnesses the failure of \s{KWP}$_1$. We first prove a lemma of independent interest:
\begin{lemma}\label{canonisationonq:lemma}
    Let $\prec$ be an arbitrary linear order on $\Q$, and let $<$ denote the usual linear order on $\Q$. Then there exists some $A \in [\Qq]^\eta$ satisfying one of the following conditions:
    \begin{enumerate}
        \item $\Aprec \cong \omega$;
        \item $\Aprec \cong \omega^*$;
        \item $\prec$ is identical to $<$ on $A$;
        \item $\prec$ is the reverse of $<$ on $A$.
    \end{enumerate}
\end{lemma}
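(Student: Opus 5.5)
The plan is to reduce the lemma to a canonisation theorem for colourings of pairs of rationals (Galvin's theorem, in the sharper form due to Devlin). Since $\Q$ is countable, fix an enumeration $\langle q_n : n \in \omega\rangle$ of $\Q$, and let $\lhd$ be the induced well-order of type $\omega$, so $q_m \lhd q_n \iff m<n$.

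For a pair $\{x,y\}$ with $x<y$, record two bits:
\begin{itemize}
\item whether $x \lhd y$ or $y \lhd x$ (the \emph{type} of the pair);
\item whether $x \prec y$ or $y \prec x$ (the \emph{colour} of the pair).
\end{itemize}
The key step is the following claim. There is $A \in [\Qq]^\eta$ on which the colour of a pair depends only on its type. That is, there are constants $c_0, c_1 \in 2$ such that:
\begin{itemize}
\item every pair $x<y$ in $A$ with $x \lhd y$ has colour $c_0$;
\item every pair $x<y$ in $A$ with $y \lhd x$ has colour $c_1$.
\end{itemize}
Both types must occur in any copy of $\eta$, since an $\omega$-enumeration of a dense order cannot be monotone in either direction.

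I would prove the claim by a fusion argument, building an embedding $e : \lessomegalex \to \Qq$ node by node along a canonical enumeration of $\lessomega$. At each stage we keep, for each current node, an open interval of candidate rationals. We then pass to subintervals on which the $\prec$-relation to every previously placed point is constant. Each such constraint is a $2$-colouring of a single open interval according to $\prec$ against one fixed point. Some colour class is therefore dense in a subinterval, or else its complement contains a subinterval; either way we can shrink the interval. We also choose new points to be $\lhd$-late, so that the $\lhd$-order of placed points follows the order of construction. One then uses the infinite pigeonhole principle on the two types to stabilise the colour per type, pruning to a subtree still of type $\eta$. Making this pigeonhole/pruning step compatible with density is the main obstacle. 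If the direct construction becomes unwieldy, I would simply cite Galvin's theorem $\eta \to (\eta)^2_{<\omega,2}$ together with Devlin's identification of the two unavoidable colours with the two embedding types.

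Given the claim, there are four cases, and they correspond exactly to the four alternatives of the lemma:
\begin{itemize}
\item If both types give $x \prec y$ (for $x<y$), then $\prec$ agrees with $<$ on $A$; this is case 3.
\item If both types give $y \prec x$, then $\prec$ is the reverse of $<$ on $A$; this is case 4.
\item If $c_0$ says $x\prec y$ and $c_1$ says $y \prec x$, then in both cases the $\prec$-smaller element is the $\lhd$-smaller one, so $\prec$ coincides with $\lhd$ on $A$. Since $A$ is an infinite subset of an $\omega$-ordered set, $\Aprec \cong \omega$; this is case 1.
\item In the remaining case $\prec$ is the reverse of $\lhd$ on $A$, so $\Aprec \cong \omega^*$; this is case 2.
\end{itemize}
Transitivity is automatic in all four cases, since $\prec$ is a genuine linear order.
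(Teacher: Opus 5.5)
Your argument is correct, provided the key claim is justified by citation, and it takes a genuinely different route from the paper. The paper never colours pairs. It first handles the $\omega$/$\omega^*$ alternatives by analysing $\prec$-initial and final segments that are $<$-scattered. If every proper $\prec$-initial segment is $<$-scattered, a back-and-forth inside the $<$-dense $\prec$-final segments gives a copy of $\eta$ of $\prec$-type $\omega$. Otherwise the maximal such segments are canonically trimmed away. It then builds a binary tree of copies $Q(s)$ of $\eta$, in which $Q(\szero)$ and $Q(\sone)$ lie on opposite $<$-sides of $q(s)$ and are $\prec$-separated. It colours each node by the direction of that separation and applies Halpern--L\"auchli to this colouring of \emph{nodes}.

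You instead make one appeal to a canonical partition theorem for \emph{pairs} of rationals. That is a heavier black box, but afterwards the deduction is a few lines, and it explains the shape of the lemma. The four alternatives are exactly the four ways of assigning a $\prec$-direction to the two Sierpi\'nski types, with $\omega$ and $\omega^*$ arising as the enumeration order and its reverse. As with the paper's use of Halpern--L\"auchli, the citation is harmless in \s{ZF}: it is a $\Pi^1_2$ statement about reals, so it holds there by absoluteness.

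Two remarks on how the claim should be justified.

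\emph{The direct fusion sketch does not work as written.} If a colour class is only dense in a subinterval, you cannot shrink to an \emph{interval} on which the $\prec$-relation to a fixed point is constant, since both classes can be dense in every interval. You would have to carry somewhere-dense sets instead, as the paper does with $Q(s) \in [\Qq]^\eta$. Moreover, the ``pigeonhole and prune'' step is exactly where a Halpern--L\"auchli-type theorem is needed; plain pigeonhole does not preserve density.

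\emph{The bare form of Galvin's theorem is vacuous here.} ``At most two colours on some copy of $\eta$'' says nothing about your colouring, which has only two values. The cleanest fix applies Galvin's theorem to the four-valued colouring that records both the type and the $\prec$-colour of a pair. This also spares you matching Devlin's tree-theoretic types to your particular enumeration. On the resulting $A \in [\Qq]^\eta$ at most two of the four values occur. Both types occur on every copy of $\eta$, as you observed, so each type receives exactly one colour, which is precisely your claim.
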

\begin{proof}[Proof of Lemma \ref{canonisationonq:lemma}]
    Fix such an ordering $\prec$. For the duration of this proof we fix $\langle q_n :n \in \omega \rangle$ an enumeration of $\Q$; when we refer to a rational ``of minimal index", we mean $q_n$ with $n$ minimal according to this enumeration.

    First observe that whenever $\L$ is ordered as $\eta$ and $A \subseteq L$ is scattered under $<$, $\langle L \setminus A, < \rangle$ is still ordered as $\eta$; otherwise, it would have to be the case that $A$ contained an interval of $\L$ (either between two elements of $L$ or an initial or final segment); but then $A$ is not scattered, a contradiction.
    \begin{claim}\label{reducetoomega:claim}
        If $\prec$ is such that whenever $\Iprec$ is a proper initial segment of $\Qprec$, $\Ii$ is scattered, then there is $A \in [\Qq]^\eta$ with $\Aprec \cong \omega$.
    \end{claim}
    \begin{claimproof}
        Note that the given condition implies that $\Qprec$ has no maximal element and thus has cofinality $\omega$. Fix a cofinal sequence $x_0 \prec x_1 \prec \dots$ in $\Qprec$, and for each $n \in \omega$ set $F_n \coloneqq \{y \in \Q : x_n \preceq y\}$. Then each $F_n$ is dense in $\Qq$ (as $\Q \setminus F_n$ is scattered under $<$). We can now run a back-and-forth construction to build $A \in [\Qq]^\eta$ with the desired property.
        
        We build $\langle a_n : n \in \omega\rangle$ recursively like so: take $a_0 \in F_0$, and, given $a_0, \dots, a_{n-1}$, take $a_n \in F_n$ such that for all $m < n$,
        \begin{align*}
            a_n < a_m \iff q_n < q_m&\text{, and}\\
            a_n > a_m \iff q_n > q_m&\text{;}
        \end{align*}
        then set $A \coloneqq \{a_n : n \in \omega\}$.\footnote{Observe that this does not use Choice, as we can at each step take $a_n$ to be minimal according to the enumeration $\langle q_n :n \in \omega \rangle$.} Clearly $\Aa \cong \Qq$, so $A \in [\Qq]^\eta$. Now, since for each $n \in \omega$, $A$ contains only finitely many elements of $F_n \setminus F_{n+1}$, $\Aprec$ is ordered as $\omega$.
    \end{claimproof}
        By symmetry, the corresponding statement with initial segments replaced by final segments and the order type $\omega$ replaced by $\omega^*$ also holds.
    \begin{claim}\label{isfsnonscattered:claim}
        Suppose $A \in [\Qq]^\eta$ has the property that for all $A' \in [\Aa]^\eta$, $\langle A', \prec \rangle \not \cong \omega$ and $\langle A' \prec \rangle \not \cong \omega^*$. Then there exists $B \in [\Aa]^\eta$ with the following property:

        For any $I \subseteq B$ such that $\Iprec$ is either an initial or final segment of $\Bprec$, $\Ii$ is non-scattered.

        Moreover, this $B$ is determined from $A$ in a canonical way.
    \end{claim}
    \begin{claimproof}
        Suppose some (necessarily proper) initial segment $I$ of $\Aprec$ has that $\Ii$ is scattered. Taking a union of all such $\prec$-initial segments, we obtain $J \subseteq A$, the maximal initial segment of $\Aprec$ with the property that for any proper initial segment $I$ of $\Jprec$, $\Ii$ is scattered.

        If $\Jj$ is non-scattered, i.e.\ $\eta \le \otp \Jj$, then letting $T \in [\Jj]^\eta$ and applying Claim \ref{reducetoomega:claim} with $\langle T, < \rangle$ in place of $\Qq$, we obtain $A' \in [\langle T, < \rangle]^\eta$ with $\langle A', \prec \rangle \cong \omega$, contradicting our assumption that no $A' \in [\Aa]^\eta$ has this property.

        So $\Jj$ is scattered. It follows that $B' \coloneqq A \setminus J$ is ordered as $\eta$ and has the property that for any $I$ an initial segment of $\langle B', \prec \rangle$, $\Ii$ is non-scattered.

        Running the symmetric procedure on $B'$, we reduce to $B \in [\langle B', <\rangle]^\eta \subseteq [\Aa]^\eta$ with the property that whenever $I$ is an initial segment \emph{or} a final segment of $\Bprec$, $\Ii$ is non-scattered.

        Note finally that $B$ was obtained from $A$ by removing uniquely-determined initial and final segments of $\Aprec$, and so this procedure canonically determines $B$.
    \end{claimproof}
    If there exists some $A \in [\Qq]^\eta$ with $\Aprec \cong \omega$ or $\Aprec \cong \omega^*$ we are done, so assume that no such $A$ exists. We now define two functions
    \begin{align*}
        q &: \lessomega \to \Q,\\
        Q &: \lessomega \to [\Qq]^\eta
    \end{align*}
    recursively; our desired $A$ will be a particular subset of $q \im \lessomega$.

    Set $q(\emptyset)$ to be any element of $\Q$ (e.g.\ $q_0$ in the enumeration of $\Q$ fixed above), and $Q(\emptyset) = \Q$. For $s \in \lessomega$, given $q(s)$ and $Q(s)$, we find $Q(\szero)$ and $Q(\sone)$ in the following way:

    First, apply Claim \ref{isfsnonscattered:claim} to $Q(s) \cap (-\infty, q(s))_<$ and $Q(s) \cap (q(s), \infty)_<$ to obtain $B_0$ and $B_1$, respectively, each with the property that any $\prec$-initial segment or $\prec$-final segment is non-scattered in the order $<$. Let $n \in \omega$ be minimal such that either
    \begin{enumerate}[(a)]
        \item $B_0' \coloneqq B_0 \cap (\leftarrow,q_n)_\prec$ and $B_1' \coloneqq B_1 \cap (q_n,\rightarrow)_\prec$ are both non-empty, or
        \item $B_0''\coloneqq B_0 \cap (q_n,\rightarrow)_\prec$ and $B_1'' \coloneqq B_1 \cap (\leftarrow,q_n)_\prec$ are both non-empty.
    \end{enumerate}
    In case (a), $\langle B_0',<\rangle$ and $\langle B_1',<\rangle$ are both non-scattered, and as such each consist of a densely-ordered sum of scattered intervals; by taking the minimal-indexed element of each of these scattered intervals, then removing the minimal or maximal element if necessary, we can find $A_0 \in [\langle B_0',<\rangle]^\eta$, $A_1 \in [\langle B_1',< \rangle]^\eta$ in a canonical way; by construction, $A_0 \prec A_1$. In case (b), we instead obtain $A_0 \in [\langle B_0'',< \rangle]^\eta$, $A_1 \in [\langle B_1'', <\rangle]^\eta$ with $A_0 \succ A_1$.\footnote{It is possible that both (a) and (b) hold; in this case we proceed as in case (a).} Set $Q(\szero) = A_0$ and $Q(\sone) = A_1$, and let $q(\szero) \in Q(\szero)$, $q(\sone) \in Q(\sone)$ of minimal index.

    We now find our desired $A$ as a subset of $q \im \lessomega$. Consider the colouring $c: \lessomega \to 2$ given by, for $s \in \lessomega$,
    \[c(s) = \begin{cases*}
        0 & if $Q(\szero) \prec Q(\sone)$;\\
        1 & if $Q(\szero) \succ Q(\sone)$.
    \end{cases*}\]
    By, e.g.\ the Halpern–Läuchli theorem in the setting of one tree, there exists $H \subseteq \lessomega$ such that $\langle H, \sqsubseteq\rangle$ is isomorphic to $\langle \lessomega,\sqsubseteq\rangle$ on which $c$ is homogeneous; then $A \coloneqq q \im H$ has the desired property: if $H$ is homogeneous with colour 0, then for any $s, t \in \lessomega$ with $s \lex t$, we have that $q(s) \prec q(t)$, so $A$ satisfies condition 3; if instead $H$ is homogeneous with colour 1, $s \lex t$ implies $q(s) \succ q(t)$, and so $A$ satisfies condition 4.
\end{proof}
\begin{proof}[Proof of Theorem \ref{countablenonscattered:thm}]
\setcounter{scratch}{\value{thmcount}}
\setcounter{thmcount}{\getrefnumber{countablenonscattered:thm}}
    Fix $\tau$, $\L$, and an injection $i : L \hookrightarrow \twoalpha$ for some ordinal $\alpha$. We will build  a colouring with no homogeneous set on a dense subset of $[\L]^\tau$.

    For $j \in \{0,1\}$, define $\D_j \subseteq [\L]^\tau$ like so:
    \begin{align*}
        \D_0& \coloneqq \{A \in [\L]^\tau : i \im A \text{ is ordered as }\omega\text{ or }\omega^*\text{ under }\lex\};\\
        \D_1& \coloneqq \{A \in [\L]^\tau : i \text{ is order-preserving or order-reversing on }A\},
     \end{align*}
    where here ``order-preserving" (resp. ``reversing") means with respect to the orders $\L$ and $\twoalphalex$, and define $\D \coloneqq \D_0 \cup \D_1$. Observe that each of $\D_0$, $\D_1$ are downwards closed.
    \setcounter{claimcount}{0}
    \begin{claim}
        $\D$ is dense in $[\L]^\tau$.
    \end{claim}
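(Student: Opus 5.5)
Fix an arbitrary $X \in [\L]^\tau$. We must find $X' \in [X]^\tau$ lying in $\D_0$ or $\D_1$. Since $\tau$ is countable and non-scattered, we have $\eta \le \tau$, and conversely every countable order type embeds into $\eta$, so $\tau \le \eta$. So it suffices to produce, inside $X$, a copy $Y$ of $\eta$ (with respect to $<_L$) such that $i$ restricted to $Y$ is either order-preserving, order-reversing, or sends $Y$ onto a set ordered as $\omega$ or $\omega^*$ under $\lex$. Indeed, if such a $Y$ exists, any $X' \in [Y]^\tau$ lies in $\D$, because both defining properties pass to subsets. (For $\D_0$, an infinite subset of an $\omega$- or $\omega^*$-ordered set is again ordered as $\omega$ or $\omega^*$, and $X'$ is infinite.)

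To find $Y$, first choose some $Z \in [X]^\eta$. This needs no choice: $X$ is a single given set, so we may fix one order-isomorphism of $X$ with $\tau$ and one embedding of $\eta$ into $\tau$, and take their composite. Fixing a bijection $e : \Q \to Z$ with $e$ order-preserving from $\Qq$ to $\langle Z, <_L\rangle$, we transport the order $\lex$ via $i \circ e$. That is, we define $p \prec r$ iff $i(e(p)) \lex i(e(r))$. Since $i$ is injective, this is a linear order on $\Q$.

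Now apply Lemma \ref{canonisationonq:lemma} to $\prec$. It gives $A \in [\Qq]^\eta$ such that one of the following holds: $\Aprec \cong \omega$, $\Aprec \cong \omega^*$, $\prec$ agrees with $<$ on $A$, or $\prec$ is the reverse of $<$ on $A$. Put $Y \coloneqq e \im A$. Then $Y$ is a copy of $\eta$ in $X$, and in the first two cases $i \im Y$ is ordered as $\omega$ or $\omega^*$ under $\lex$. In the last two cases $i$ is order-preserving or order-reversing on $Y$. Any $X' \in [Y]^\tau$ then lies in $\D_0$ or $\D_1$ respectively, which proves density.

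The only delicate point is that no choice is used. This holds because the density claim is asserted for each individual $X$, so we only need to make finitely many existential instantiations for that one $X$. All the real combinatorial work is done by Lemma \ref{canonisationonq:lemma}.
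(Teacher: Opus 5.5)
Your proposal is correct and follows the paper's proof: pass to a copy of $\eta$ inside the given copy of $\tau$, apply Lemma \ref{canonisationonq:lemma} to the pulled-back order $i^{-1}(\lex)$, then take any copy of $\tau$ inside the resulting copy of $\eta$, since membership in $\D_0$ or $\D_1$ passes to such subsets. Your explicit transport via the isomorphism $e$ and your remark that no choice is needed (density only asks, for each given copy of $\tau$ separately, for a subcopy in $\D$) spell out details the paper leaves implicit.
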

    \begin{claimproof}
        Let $A \in [\L]^\tau$; we will show that there is some $A' \in [A]^\tau$ with $A' \in \D$. Note that $A$ can be reduced to some $Q \in [A]^\eta$; then applying Lemma \ref{canonisationonq:lemma} to $Q$ with the orders $<$ and $\prec \,\coloneqq i^{-1}(\lex)$, we reduce further to some $Q' \in [\langle Q,<\rangle]^\eta$ such that either $i \im Q'$ is ordered as one of $\omega$ or $\omega^*$, or $i$ is order-preserving or order-reversing on $Q'$. Now let $A'$ be any element of $[\langle Q',<\rangle]^\tau$ (these exist as $\langle Q',<\rangle$ is ordered as $\eta$); it is clear that $A'$ retains whichever of the above properties $Q'$ had under $i$.
    \end{claimproof}
    Now define $c : \D \to 2$ like so: for $A \in \D_0$, write $\{a_n : n \in \omega\}$ for the natural enumeration of the elements of $A$ determined by the ordering of $i \im A$ in type $\omega$ or $\omega^*$; then set
    \[c(A) = \begin{cases*}
        0 & if $a_0 > a_1$;\\
        1 & if $a_0 < a_1$.
    \end{cases*}\]
    To see that no $A \in \D_0$ can be homogeneous for this, note that since $\tau$ is non-scattered, we can find $n < m$ such that $a_n < a_m$ and each of $I_0 := (\leftarrow, a_n)_< \cap A$, $I_1 := (a_n, a_m)_< \cap A$ and $I_2 := (a_m, \rightarrow)_< \cap A$ are non-scattered. Letting $B := \{ a_k : k<m, k \neq n \}$, which is finite, for each $l<3$, we also have that $I_l \setminus B$ is non-scattered, which further contains a copy $J_l$ of $I_l$. It follows that $A' := J_0 \cup J_1 \cup J_2 \cup \{a_n, a_m\}$ is isomorphic to $A$ and $c(A') = 0$. Similarly, we can achieve that $c(A') = 1$.
     
    %To see that no $A \in \D_0$ can be homogeneous for this, note that since $\tau$ is non-scattered, it can be decomposed into a sum of maximal scattered intervals, ordered densely; since $\tau$ is countable these intervals are ordered as one of $\eta$, $\eta + 1$, $1 + \eta$, or $1 + \eta + 1$. Let us call the pieces of this decomposition the ``scattered parts" of $\tau$; then given any $x, y \in A$ from distinct non-extremal scattered parts of $A$, there is some $A' \in [A]^\tau$ with $\{x,y\} = \{a'_0, a'_1\}$; for appropriate choices of $x,y$ we can therefore find $A' \in [A]^\tau$ taking either value under $c$.

    For $A \in \D_1$, we simply apply the colouring from \cite[Theorem 1]{higheranalogues} to $i \im A$; as this colouring witnesses that $\twoalphalex \centernot \rightarrow (\tau)^\tau$ (or that $\twoalphalex \centernot \rightarrow (\tau^*)^{\tau^*}$) it is clear that there is no $A \in \D_1$ homogeneous for this.
\end{proof}\setcounter{thmcount}{\value{scratch}}
\begin{thm}\label{omegaomegastarkw:thm}
    Let $\tau$ be a well-orderable scattered order type with $\omega\omega^* \le \tau$ or $\omega^*\omega \le \tau$, and let $\L$ be a linear order. If the set $L$ injects into the power set of an ordinal, then
    \[\L \centernot \rightarrow (\tau)^\tau.\]
\end{thm}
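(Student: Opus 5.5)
We follow the template of the proof of Theorem \ref{countablenonscattered:thm}. Fix $\tau$, $\L$ and an injection $i : L \hookrightarrow \twoalpha$, and write $\prec \,\coloneqq i^{-1}(\lex)$ for the pulled-back order on $L$. The aim is a dense $\D \subseteq [\L]^\tau$ together with a colouring of $\D$ that has no homogeneous set. For every $A \in \D$ the two orders $<$ and $\prec$ should be related in a controlled way, so that on the ``well-behaved'' part of $\D$ we can transport the colouring from Theorem 2 of \cite{higheranalogues} along $i$. Since $\tau$ is well-orderable, each $A \in [\L]^\tau$ is a well-orderable set. So within $A$ we may fix a well-ordering, and hence enumerations, \emph{uniformly from $A$}: the enumeration is induced from any isomorphism with a fixed representative of $\tau$. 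This replaces the enumeration of $\Q$ used in Lemma \ref{canonisationonq:lemma}.

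\textbf{Step 1: a canonisation lemma for scattered orders.} This is the analogue of Lemma \ref{canonisationonq:lemma} for the relevant scattered types. By symmetry assume $\omega\omega^* \le \tau$. The target statement is: for any linear order $\prec$ on a set $X$ of order type $\omega\omega^*$ under $<$, there is $Y \in [\langle X,<\rangle]^{\omega\omega^*}$ such that either $\langle Y,\prec\rangle$ is ordered as $\omega$ or $\omega^*$, or $\prec$ agrees with or reverses $<$ on each $\omega$-block of $Y$ and the blocks are $\prec$-ordered in a homogeneous pattern.

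The proof uses Ramsey's theorem on pairs in two places.
\begin{enumerate}
\item On each $\omega$-block, Ramsey's theorem gives an infinite subset on which $\prec$ either agrees with or reverses $<$. By the classical fact that every $\omega$-sequence in a linear order has a monotone subsequence, the $\prec$-type of that subset is $\omega$ or $\omega^*$.
\item Across blocks, a second Ramsey argument homogenises how the blocks interleave under $\prec$.
\end{enumerate}
Everything is done canonically, by always choosing least elements in the fixed well-ordering, so that no choice is needed.

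I expect this step to be the main obstacle. Unlike the $\eta$ case, the outcome is richer: the blocks may interleave in $\prec$, for instance giving $\prec$-type $\omega$ overall. So one must carefully list the finitely many canonical patterns and check that each yields either a $\D_0$-type configuration or a configuration on which $i$ is ``blockwise monotone''.

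\textbf{Step 2: density.} Given $A \in [\L]^\tau$, we find a copy of $\omega\omega^*$ inside $A$ and canonise it using Step 1. We then need some $A' \in [A]^\tau$ inheriting the canonical property. Unlike $\eta$, a copy of $\omega\omega^*$ does not contain $\tau$. So instead we would apply a canonisation directly to $A$ itself, working by induction on the Hausdorff rank of the scattered type $\tau$ and using the well-ordering of $A$ to make choices at each level. The goal is to obtain $A' \cong \tau$ on which $i$ is order-preserving or order-reversing on a suborder witnessing $\omega\omega^*$ (the part relevant to the colouring), or on which the $\prec$-pattern is as in $\D_0$.

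\textbf{Step 3: colouring.} On the $\D_0$-like part, we colour $A$ by comparing, under $<$, the first two elements $a_0, a_1$ of $i\im A$'s $\omega$- or $\omega^*$-enumeration. We then show non-homogeneity: because $\tau$ contains $\omega\omega^*$, we can delete finitely many points and re-embed $\tau$ so as to swap the relative $<$-position of the first two elements. On the monotone part, we transport the colouring from \cite[Theorem 2]{higheranalogues} (for $\tau$ or $\tau^*$) through $i$; that colouring only inspects the relevant copy of $\omega\omega^*$ or $\omega^*\omega$ inside $i\im A$. Finally, the Observation on dense sets extends the colouring to all of $[\L]^\tau$, giving $\L \centernot\rightarrow(\tau)^\tau$.
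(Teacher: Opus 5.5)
Your plan breaks down at the density step (Steps 1--2). In the scattered case there is in general no dense set of copies on which $i$ either has $\lex$-type $\omega$ or $\omega^*$, or is monotone. This remains true even if ``monotone'' is weakened to ``monotone on some sub-copy of $\omega\omega^*$''.

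Here is a counterexample. Take $\tau=\omega\omega^*$ and $L=\bigcup_n X_n$ with $X_0>X_1>\cdots$ under $<$, each $X_n$ of type $\omega$. Choose $i$ so that $\prec\,=i^{-1}(\lex)$ agrees with $<$ on each $X_n$ but $X_0\prec X_1\prec\cdots$. Then $\prec$ is a well-order of type $\omega^2$.
\begin{itemize}
\item Every copy $Y$ of $\omega\omega^*$ meets infinitely many $X_n$ in infinite sets, so $\langle Y,\prec\rangle$ is not of type $\omega$ or $\omega^*$.
\item $i$ is monotone on no set of type $\omega\omega^*$ or $\omega^*\omega$, since neither type is a well-order.
\end{itemize}
For uncountable $\tau$ such as $\omega_1\omega^*$, your $\D_0$ is simply empty.

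So the ``blockwise monotone'' outcome you anticipate in Step 1 really occurs and cannot be reduced away, and Step 3 assigns it no colour. The remark that the colouring of \cite[Theorem 2]{higheranalogues} ``only inspects the relevant copy'' does not rescue this. That colouring needs a set on which $i$ is monotone. You would also need that set to be determined by $A$ alone and freely shrinkable inside $\D$.

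The $\D_0$ part of Step 3 also fails for scattered $\tau$. For $\tau=1+\omega\omega^*$, every sub-copy of $A$ contains the $<$-least point $e$ of $A$. If $e$ is also $\prec$-least, your colouring is constant on $[A]^\tau$.

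Finally, the well-ordering induced by an isomorphism with a fixed representative of $\tau$ is not canonical when $\tau$ has non-trivial automorphisms.

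The missing idea is to let the colouring \emph{force} monotonicity, rather than trying to reach it by reduction. The paper proceeds as follows.
\begin{itemize}
\item It uses Laver's finite decomposition and Jullien's theorem to pass to a canonically determined final segment of a piece $\tau_m$. This piece embeds $\omega\omega^*$ or $\omega^*\omega$ and is (wlog) strictly indecomposable to the right.
\item Inside it, a distinguished convex interval is chosen from $A$ alone. It is the leftmost (almost) $i$-canonised convex $\omega$- or $\omega^*$-interval minimising splitting-level data such as $d_2$ or $\delta_i$.
\item $\D$ is cut down by a further minimality condition, so that shrinking the final segment $A^+$ after the distinguished interval leaves that interval unchanged.
\item If $i$ is not monotone on $A^+$, $A$ is coloured by whether $i$ preserves the order of the two points immediately following the distinguished interval.
\item By strict indecomposability, every pair $x<y$ in $A^+$ can be moved into that position. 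So a homogeneous $A$ would have $i$ monotone on $A^+$.
\item Only in that case is the colouring from \cite[Theorem 2]{higheranalogues} applied, to $i\im A^+$.
\end{itemize}
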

We will first prove the special case of this theorem where $\tau$ is additively indecomposable, then complete the proof of Theorem \ref{omegaomegastarkw:thm} by explaining how to adapt this proof to the general case.
\begin{lemma} \label{indecscatteredkw:lemma}
 Let $\tau$ be a well-orderable additively indecomposable scattered order type with either $\omega\omega^* \le \tau$ or $\omega^* \omega \le \tau$, and let $\L$ be a linear order. If the set $L$ injects into the power set of an ordinal, then
 \[\L \centernot \rightarrow (\tau)^\tau.\]
\end{lemma}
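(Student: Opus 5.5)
The plan follows the proof of Theorem \ref{countablenonscattered:thm}. Fix $\tau$, $\L$ and an injection $i : L \hookrightarrow \twoalpha$, and let $\prec \,\coloneqq i^{-1}(\lex)$ be the pulled-back order on $L$. Since $\tau$ is well-orderable, every $A \in [\L]^\tau$ is well-orderable, so we may argue on $A$ without Choice. I will split $[\L]^\tau$ into a dense family $\D = \D_0 \cup \D_1$ of ``canonical'' copies and colour each part separately. Then the Observation on dense families gives $\L \centernot\rightarrow (\tau)^\tau$.

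The first step is a canonisation lemma for scattered indecomposable types, playing the role of Lemma \ref{canonisationonq:lemma}. By the Hausdorff/Laver analysis, an indecomposable scattered well-orderable $\tau$ of rank greater than $1$ is either $\sum_{n\in\omega}\tau_n$ or $\sum_{n\in\omega^*}\tau_n$, with the $\tau_n$ indecomposable of lower rank. Moreover, every $\tau_m$ embeds into cofinally many $\tau_n$. Call $A \in [\L]^\tau$ \emph{canonical} if one of the following holds, and assign it to $\D_0$ or $\D_1$ accordingly.
\begin{enumerate}
\item $A$ is decomposed into blocks $A_n$ of types $\tau_n$, each block is recursively canonical, and for $n<m$ the blocks are $\prec$-separated in a uniform direction: either $A_n \prec A_m$ for all such $n,m$, or $A_n \succ A_m$ for all such $n,m$. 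In addition, the direction inside the blocks is uniform. (This is $\D_1$.)
\item The order $\prec$ restricted to $A$ is a well-order or a reverse well-order. (This is $\D_0$.)
\end{enumerate}
To prove density, I proceed by induction on rank. The base case $\omega$ is Ramsey's theorem on the well-orderable $A$: it yields a $\prec$-monotone subsequence. For the inductive step, I first canonise each block using the least witness under a fixed well-order of $A$. I then use the tree structure of $\twoalphalex$ to separate the blocks. For each block, consider the splitting ordinals $\delta(x,y)$ of its elements. By passing to subcopies of later blocks, I aim to make the blocks lie in pairwise $\lex$-disjoint cones. A Ramsey argument on pairs of block indices then makes the direction uniform. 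A final application of the pigeonhole principle, together with indecomposability, yields uniformity of the internal directions. \textbf{This separation step is the main obstacle}, because interleaved blocks cannot simply be discarded. If it fails, I expect the fallback to be that the interleaving itself forces the $\prec$-type of a subcopy to be well-ordered or reverse well-ordered, which places that subcopy in $\D_0$.

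For the colouring on $\D_1$, I distinguish cases according to the type of $i \im A$ under $\lex$.
\begin{itemize}
\item If $i \im A$ contains $\omega\omega^*$ or $\omega^*\omega$, I apply the colouring of \cite[Theorem 2]{higheranalogues} to $i \im A$, as in the proof of Theorem \ref{countablenonscattered:thm}. Non-homogeneity transfers because subcopies of $A$ map to subcopies of $i \im A$.
\item Otherwise, the pattern of directions forces $i \im A$ to be well-ordered or reverse well-ordered. For example, $\omega$-blocks that are increasing internally but arranged in decreasing block order produce a type like $\omega\omega$. In this case the copy is treated together with $\D_0$.
\end{itemize}

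For the colouring on $\D_0$, let $a_0, a_1$ be the two $\prec$-least elements of $A$ (or the two $\prec$-greatest, in the reverse case). Set $c(A) = 0$ if $a_0 > a_1$ and $c(A) = 1$ if $a_0 < a_1$. To see that no copy is homogeneous, I will use indecomposability of $\tau$ together with the hypothesis $\omega\omega^* \le \tau$ or $\omega^*\omega \le \tau$. The order $<$ and the order $\prec$ then disagree infinitely often on $A$. This lets me locate a pair $x \prec y$ with $x<y$ and a pair $x' \prec y'$ with $x'>y'$. For each pair, I remove the finitely many elements that are $\prec$-below it, and then re-embed a copy of $\tau$ in the remainder, exactly as in the argument for $\D_0$ in the proof of Theorem \ref{countablenonscattered:thm}. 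For that re-embedding I will use that removing a finite set from an indecomposable copy of $\tau$ of infinite rank still leaves a copy of $\tau$ that contains the chosen pair as its two $\prec$-least elements. This gives subcopies of both colours.
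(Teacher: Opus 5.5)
There is a genuine gap: the density claim for $\D_0\cup\D_1$ is unproved and in general false, and the colouring on $\D_0$ can have homogeneous copies.

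\textbf{Density.} You leave the separation step open, so density of $\D_0\cup\D_1$ is not established. The framework also cannot cover the lemma as stated, because $\tau$ need only be well-orderable. The recursion through $\omega$- and $\omega^*$-sums of lower-rank indecomposables is a fact about countable types. Take $\tau=\omega^*\omega_1$: it is indecomposable, scattered and contains $\omega^*\omega$. But it has cofinality $\omega_1$ and embeds into each of its final segments, so it is neither kind of sum. Worse, suppose $i$ maps into ${}^\omega2$ (as happens in models of \s{ZFC}, where a \s{ZF} argument must also work). Then no copy of this $\tau$ is $\prec$-well-ordered or reverse well-ordered. Nor does any copy admit an $\omega_1$-indexed block decomposition with uniformly $\prec$-separated blocks. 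This is because $\langle{}^\omega2,\lex\rangle$ has no suborder of type $\omega_1$ or $\omega_1^*$, so $\D$ is empty.

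\textbf{Transfer in $\D_1$.} Your argument runs in the wrong direction. You need every $B\in[i\im A]^{\otp(i\im A)}$ with a different colour to pull back to a copy of $\tau$ in $A$. That is guaranteed only when $i$ is order-preserving or order-reversing on all of $A$. It is not guaranteed in your mixed case, where block direction and internal direction differ.

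\textbf{Colouring on $\D_0$.} This fails even for countable $\tau$. Let $\tau=\omega^*\omega$ and $A=\bigcup_k B_k$ with $B_0<B_1<\cdots$, each $B_k$ of type $\omega^*$. Suppose $i$ reverses the order on each $B_k$ and $i\im B_k\lex i\im B_l$ for $k<l$.
\begin{enumerate}
\item Every $A'\in[A]^\tau$ is $\prec$-well-ordered and has $i\im A'$ well-ordered, so by your own case split it receives the $\D_0$ colour.
\item The first $\omega^*$-block of $A'$ is coinitial in $A'$ and has a tail inside $B_k$, where $k$ is least with $A'\cap B_k\neq\emptyset$.
\item Hence $A'\cap B_k$ is an infinite $\prec$-initial segment of $A'$ on which $\prec$ reverses $<$.
\item So the two $\prec$-least elements satisfy $a_0>a_1$, and $c(A')=0$ for every $A'$: $A$ is homogeneous.
\end{enumerate}
Your step ``remove the finitely many elements $\prec$-below the pair'' breaks as soon as $\langle A,\prec\rangle$ is a well-order longer than $\omega$, and here that cannot be avoided by passing to subcopies.

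\textbf{The missing idea.} Do not canonise $\prec$ on the whole copy. The paper uses Jullien's theorem to assume $\tau$ strictly indecomposable to the right. On a dense set it picks out a \emph{distinguished interval}: an ($i$-)canonised convex $\omega^*$- or $\omega$-piece, chosen by minimising the splitting data $d_2$ (resp.\ $\delta_i$) with a leftmost tie-break, on copies where this minimum cannot be lowered by passing to subcopies. It then colours by comparing $i$ on the two consecutive elements $x_A<y_A$ immediately after that interval.

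Strict indecomposability lets any pair $x<y$ in the final segment $A^+$ become $(x_{A'},y_{A'})$ for some subcopy. So homogeneity forces $i$ to be monotone on $A^+$. Only then is the colouring of \cite[Theorem 2]{higheranalogues} applied to $i\im A^+$, where its witnesses do pull back to copies of $\tau$.
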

We first introduce some definitions and notation used throughout the proofs of Lemma \ref{indecscatteredkw:lemma} and Theorem \ref{omegaomegastarkw:thm}. Particular sets ordered as $\omega$ or $\omega^*$ will be of key importance throughout; we adopt the convention that if $X \in [\L]^\omega$ for some $\L$, $\langle x_n : n \in \omega\rangle$ is the natural increasing enumeration of $X$, so $X = \{x_n : n \in \omega\}$ and $x_0 < x_1 < \dots$. Similarly, if $X \in [\L]^{\omega^*}$, $\langle x_n : n \in \omega\rangle$ is the natural decreasing enumeration of $X$. We now introduce a useful notion from \cite{higheranalogues}:

For $X \in [\twoalphalex]^\omega$, say that $X$ is \emph{canonised} if for $x, y \in X$ with $x < y$, the value of $\delta(x,y)$ is determined by $x$; symmetrically, for $X \in [\twoalphalex]^{\omega^*}$, say that $X$ is canonised if the value of $\delta(x,y)$ for $x < y \in Y$ is determined by $y$. In either case, this is equivalent to the condition that $\delta(x_0,x_1) < \delta(x_1,x_2) < \delta(x_2,x_3) < \dots$.
\begin{obs}\label{canonisation:obs}
    Given any $X \in [\twoalphalex]^\omega \cup [\twoalphalex]^{\omega^*}$, there is $X' \in [X]^{\otp(X)}$ which is canonised.\footnote{In fact, one can define a uniform procedure for picking out such an $X'$ canonically: see \cite[Lemma 9]{higheranalogues}.}\qed
\end{obs}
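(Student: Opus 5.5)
The plan is to treat the case $X \in [\twoalphalex]^\omega$ directly and then obtain the $\omega^*$ case by symmetry. Let $\langle x_n : n \in \omega\rangle$ be the increasing enumeration of $X$. The aim is to extract a subsequence along which the consecutive splitting levels $\delta$ strictly increase; by the equivalent formulation stated before the observation, that subsequence is canonised. The extraction will be explicit and uniform, so no Choice is needed.

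Write $\delta_n \coloneqq \delta(x_n, x_{n+1})$. First I will check the standard ultrametric-type identity: for $x \lex y \lex z$ in $\twoalpha$ we have $\delta(x,z) = \min(\delta(x,y),\delta(y,z))$. Indeed, all three agree below $\beta \coloneqq \min(\delta(x,y),\delta(y,z))$, and at $\beta$ the values are forced to be $x(\beta)=0$ and $z(\beta)=1$. Iterating this gives $\delta(x_m,x_n) = \min_{m \le k < n}\delta_k$ for $m<n$. Next define $d_m \coloneqq \min_{k \ge m} \delta_k$, which exists because the $\delta_k$ are ordinals. The sequence $\langle d_m : m \in \omega\rangle$ is non-decreasing.

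The key step is to show that $\langle d_m\rangle$ is not eventually constant. Suppose $d_m = \beta$ for all $m \ge M$. Then $\delta_k = \beta$ for infinitely many $k \ge M$. Also every $\delta_k$ with $k \ge M$ is at least $\beta$, so all $x_j$ with $j \ge M$ agree below $\beta$. Since the sequence is $\lex$-increasing, the values $x_j(\beta)$ for $j \ge M$ are then non-decreasing in $j$. Hence they can change from $0$ to $1$ at most once. This contradicts $\delta_k = \beta$ (that is, $x_k(\beta)=0$ and $x_{k+1}(\beta)=1$) for infinitely many $k$.

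It follows that $J \coloneqq \{m : d_m < d_{m+1}\}$ is infinite. For $m \in J$ we have $d_m = \delta_m$. Enumerate $J$ increasingly as $m_0 < m_1 < \cdots$ and set $X' \coloneqq \{x_{m_k} : k \in \omega\}$. Then
\[\delta(x_{m_k},x_{m_{k+1}}) = \min_{m_k \le j < m_{k+1}} \delta_j = d_{m_k}, \qquad\text{and}\qquad d_{m_k} < d_{m_k+1} \le d_{m_{k+1}},\]
so the consecutive splitting levels strictly increase and $X' \in [X]^\omega$ is canonised.

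For $X \in [\twoalphalex]^{\omega^*}$, I apply the same argument to the decreasing enumeration, equivalently to the images under the bit-flipping map, which reverses $\lex$ and preserves $\delta$. The only genuinely non-trivial point is the non-stabilisation of $d_m$, which rests on the monotonicity of the coordinate $x_j(\beta)$. Everything else is bookkeeping.
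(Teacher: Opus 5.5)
Your proof is correct, and it is choice-free as the setting requires. The paper gives no argument for this observation at all: it is marked \qed, and the footnote defers to \cite[Lemma 9]{higheranalogues} for a canonical version. So there is no step-by-step comparison to make; your proof supplies the missing argument.

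The steps all check out:
\begin{itemize}
\item The identity $\delta(x,z)=\min(\delta(x,y),\delta(y,z))$ holds for $x \lex y \lex z$. It is worth one sentence that $\delta(x,y)=\delta(y,z)$ is impossible, since $y$ would need both values $0$ and $1$ at that coordinate; this is what makes $x(\beta)=0$ and $z(\beta)=1$ ``forced''.
\item Once all $x_j$ with $j\ge M$ agree below $\beta$, the map $j\mapsto x_j(\beta)$ is monotone. This allows at most one $k\ge M$ with $\delta_k=\beta$, so $\langle d_m\rangle$ cannot stabilise.
\item The computation $\delta(x_{m_k},x_{m_{k+1}})=d_{m_k}<d_{m_k+1}\le d_{m_{k+1}}$ is right.
\item The bit-flip reduction for the $\omega^*$ case is sound. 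Flipping reverses $\lex$, preserves $\delta$, and turns the decreasing enumeration into the increasing one, so canonisation transfers in both directions.
\end{itemize}

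Your index set $J=\{m : d_m<d_{m+1}\}$ is exactly $\{m : \delta_m<\delta_k \text{ for all } k>m\}$. It is defined outright from $X$ with no choices. So you have proved more than mere existence: you have a uniform canonisation procedure of the kind the footnote attributes to \cite[Lemma 9]{higheranalogues}. For the uses of this observation in the present paper, existence alone suffices: each is a single instance inside a density argument.
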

The proofs start by assuming the existence of some injection $i : L \hookrightarrow \twoalpha$. Given a fixed such $i$, for $O \in [\L]^\omega$ or $O \in [\L]^{\omega^*}$, say $O$ is \emph{$i$-canonised} if the following two conditions hold:
\begin{enumerate}
    \item $i$ is order-preserving or order-reversing on $O$;
    \item $i \im O$ is canonised in $\twoalphalex$.
\end{enumerate}

Throughout both proofs the splitting levels (in $\twoalphalex$) between elements of $i \im A$ for $A \in [\L]^\tau$ will be significant, so we introduce the following notation to avoid clutter: for $x, y \in L$, write
\[\delta_i(x,y) \coloneqq \delta(i(x),i(y)).\]
In particular, it will often be important for us to consider two such consecutive quantities in copies of $\omega$ or $\omega^*$: for $X \in [\L]^\omega \cup [\L]^{\omega^*}$ and $n \in \omega$, write

\[d_n(X) \coloneqq (\delta_i(x_n,x_{n+1}),\delta_i(x_{n+1},x_{n+2})).\]
For $n \in \omega$ fixed, the $d_n(X)$ are ordered according to the lexicographic ordering on $\alpha \times \alpha$. The quantity $d_2(X)$ will be important in one case in the proof of Lemma \ref{indecscatteredkw:lemma}, and the quantity $d_0(X)$ will be important in the proof of Theorem \ref{omegaomegastarkw:thm}.
\begin{proof}[Proof of Lemma \ref{indecscatteredkw:lemma}]
    Fix $\tau$, $\L$, and an injection $i : L \hookrightarrow \twoalpha$ for some ordinal $\alpha$. We appeal here to a result of Jullien:
    \begin{subfact}\label{jullien:fact}
    \textnormal{(\!\!\cite{jullienthesis})}
        Any indecomposable scattered order is either strictly indecomposable to the right or to the left.\footnote{This result was proved in \s{ZFC} but still holds in \s{ZF} for well-orderable scattered orders; see \cite[Footote 6]{higheranalogues}.}
    \end{subfact}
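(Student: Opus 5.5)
The plan is to argue by contradiction using the structure of the cuts of $\sigma$, and to use scatteredness only at the end, by building a copy of $\eta$. Since the order is well-orderable, I fix a well-ordering of its underlying set once and for all. Every embedding or element I need to choose will then be taken least in some definable sense, so the recursion below needs no Choice. This is what makes Jullien's \s{ZFC} argument go through in \s{ZF}.

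Let $\sigma$ be indecomposable and scattered. For a cut $\sigma=\tau+\varphi$, call it \emph{left-bad} if $\sigma\not\le\tau$ and \emph{right-bad} if $\sigma\not\le\varphi$. Indecomposability says no cut is both. The left-bad cuts form an initial segment $C_L$ of the cut-ordering, since shrinking $\tau$ preserves $\sigma\not\le\tau$. Symmetrically, the right-bad cuts form a final segment $C_R$. Suppose $\sigma$ is strictly indecomposable on neither side, so $C_L\neq\emptyset$ and $C_R\neq\emptyset$. I first show that $C_L$ lies entirely below $C_R$. If $c\in C_R$ lay below some $d\in C_L$, then $c\in C_L$ as well, because $C_L$ is downward closed. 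Then $c$ would be both left-bad and right-bad, which is impossible.

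Every cut strictly between $C_L$ and $C_R$ has $\sigma$ embedding into both sides. A cut in $C_L$ has $\sigma$ embedding into its right side, and a cut in $C_R$ has $\sigma$ embedding into its left side.

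The key step is the following. Fix $c_0\in C_L$ and $d_0\in C_R$. Then $\sigma$ embeds into $(c_0,\rightarrow)$ via some $e$, and I consider the image under $e$ of a right-bad cut $d_0$. There are two cases.
\begin{enumerate}
\item The image cut $e(d_0)$ lies in $C_R$. Then its left side contains a copy of $\sigma$, and that copy lies entirely to the right of $c_0$.
\item The image cut $e(d_0)$ falls in $C_L$ or strictly between $C_L$ and $C_R$. In this case I use the fact that $\sigma$ embeds into the left side of $d_0$ to pull back and place a copy of $\sigma$ beyond $c_0$ but below $d_0$.
\end{enumerate}
Either way, the aim is to show that some bounded interval $(c,d)$ of $\sigma$ contains a copy of $\sigma$, with $c\in C_L$ and $d\in C_R$. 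Inside that copy the same configuration recurs, namely a left-bad cut, a right-bad cut, and room for $\sigma$ on the appropriate sides.

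Granting that, I recurse along $\lessomegatree$. At a node $s$ I have a copy $\sigma_s$ of $\sigma$. I choose a point $x_s\in\sigma_s$ together with two subcopies $\sigma_{s^\frown\langle0\rangle}$ and $\sigma_{s^\frown\langle1\rangle}$ lying strictly below and strictly above $x_s$ respectively. Each choice is made canonically via the fixed well-ordering. The points $x_s$, ordered by $\lessomegalex$, then form a copy of $\eta$ inside $\sigma$, contradicting scatteredness.

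The main obstacle is that key step: showing that the bad cuts force a copy of $\sigma$ strictly inside a bounded interval. It is not enough to know $\sigma+\sigma\le\sigma$, since that holds for $\omega$. I would first try to carry out the case analysis above carefully, tracking the images of $C_L$ and $C_R$ under a chosen embedding. If that stalls, I would fall back on induction on Hausdorff rank. For this I would use the decomposition of scattered indecomposable orders as $\omega$-sums or $\omega^*$-sums of indecomposables of smaller rank. An $\omega$-sum whose summands are cofinally as large as $\sigma$ is strictly indecomposable to the right, and dually an $\omega^*$-sum is strictly indecomposable to the left. The well-orderability of $\sigma$ again makes the rank analysis choiceless.
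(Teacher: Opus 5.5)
\textbf{There is a genuine gap: your key step is true but cannot produce $\eta$, and the missing ingredient is applying indecomposability at the boundary cut between $C_L$ and $C_R$.}

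The gap lies between your ``key step'' and the tree recursion. The key step is true and needs no case analysis. For $c\in C_L$ and $d\in C_R$, embed $\sigma$ into the left side of $d$ by some $g$. The $g$-preimage of the left side of $c$ is an initial segment of $\sigma$ with no copy of $\sigma$, since $c$ is left-bad. By indecomposability the complementary final segment contains a copy, and $g$ maps it into the interval between $c$ and $d$.

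This gives nesting, not branching. Your recursion needs, inside each copy, a point with a copy of $\sigma$ strictly below it \emph{and} one strictly above it, i.e.\ $\sigma+1+\sigma\le\sigma$. For $\sigma\neq 0$ this is equivalent to $\sigma+\sigma\le\sigma$, i.e.\ to a cut strictly between $C_L$ and $C_R$, and nothing in the proposal produces one.

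In fact the configuration you iterate already occurs in the scattered order $\omega+\omega^*$. There the cut after the least point is left-bad, the cut before the greatest point is right-bad, and the interval between them is again of type $\omega+\omega^*$. So these ingredients alone cannot yield $\eta$. Indecomposability must be used at the particular cut where $C_L$ meets $C_R$, which in $\omega+\omega^*$ is exactly where it fails.

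Your remark that $\sigma+\sigma\le\sigma$ holds for $\omega$ is false: every proper initial segment of $\omega$ is finite. In fact $\sigma+\sigma\le\sigma$ is exactly what would suffice. For scattered $\sigma\neq 0$ it never holds, so the argument has to run in the opposite direction.

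The missing idea is to use scatteredness \emph{first}. Run your recursion with one fixed embedding of $\sigma+1+\sigma$ into $\sigma$. This shows, without choice, that $\sigma+\sigma\le\sigma$ would give a copy of $\eta$. So for scattered $\sigma\neq 0$ no cut has $\sigma$ on both sides, and every cut lies in exactly one of $C_L$, $C_R$.

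Now apply indecomposability to the boundary cut $I^*=\{x:\sigma\not\le(\leftarrow,x]\}$, $F^*=\sigma\setminus I^*$.
\begin{itemize}
\item Suppose $\sigma\le F^*$ via $g$ and $I^*\neq\emptyset$. Pick $l\in I^*$ and set $y=g(l)\in F^*$. Since $g$ maps $\sigma$ into $F^*$, above $l$, the map $g\circ g$ embeds $\sigma$ into $(y,\rightarrow)$. But $y\in F^*$ gives $\sigma\le(\leftarrow,y]$, so the cut after $y$ has $\sigma$ on both sides, a contradiction. Hence $I^*=\emptyset$. Every nonempty initial segment then contains some $(\leftarrow,x]$, which contains $\sigma$, so $\sigma$ is strictly indecomposable to the left.
\item Suppose $\sigma\le I^*$ via $g$ and $F^*\neq\emptyset$. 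Pick $r\in F^*$ and set $y=g(r)\in I^*$. Then $g\circ g$ embeds $\sigma$ into $(\leftarrow,y)$, contradicting $y\in I^*$. Hence $I^*=\sigma$. Indecomposability at the cut after each $x$ gives $\sigma\le(x,\rightarrow)$, so every nonempty final segment contains $\sigma$ and $\sigma$ is strictly indecomposable to the right.
\end{itemize}
This uses neither choice nor the well-ordering. The paper itself gives no proof; it only cites Jullien, with the \s{ZF} version deferred to another paper.

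Your Hausdorff-rank fallback does not close the gap as written. It presupposes a structure theorem for indecomposable scattered types that you do not establish. Its criterion (``summands cofinally as large as $\sigma$'') also cannot be meant literally, since summands of smaller rank never embed $\sigma$.
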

    Wlog $\tau$ is strictly indecomposable to the right. We will define a colouring $c: \D \to 2$ on a dense subset $\D$ of $[\L]^\tau$ with no homogeneous set by means of the following strategy: $\D$ will have the property that every $A \in \D$ has an interval which can be picked out and which is followed by (at least) two consecutive elements of $A$. Call this the \emph{distinguished interval} of $A$; roughly speaking, we first colour according to whether or not $i$ is order-preserving on the
    two elements of $A$ immediately following the distinguished interval. Since $\tau$ is strictly indecomposable to the right, for any $x < y \in A$ both above the distinguished interval, we can reduce to some $A' \in [A]^\tau$ such that $x$ and $y$ are the two elements of $A'$ picked out in this way; it follows that for $A$ to be homogeneous for this colouring, it must be the case that $i$ is either order-preserving or order-reversing on the final segment of $A$ above the distinguished interval. We then colour those $A$ with this property by applying the colouring $C$ from \cite[Theorem 2]{higheranalogues} to the image under $i$ of this final segment of $A$; since $i$ is either order-preserving or order-reversing on this final segment, its image in $\twoalphalex$ is ordered as some order type $\varphi$ with the property that $C$ witnesses $\twoalphalex \centernot \rightarrow (\varphi)^\varphi$. It therefore follows that none of these sets will be homogeneous for $c$ either. 

    \begin{case} $\tau$ has a condensation class ordered as $\omega^*$ or a condensation class ordered as $\zeta$.\end{case}

    Here our distinguished interval (and the dense subset of $[\L]^\tau$ on which we can guarantee that such an interval exists) has a somewhat complex definition. Let us write
    \[[A]^{\omega^*}_c \coloneqq \{X \in [A]^{\omega^*}: X \text{ is convex in }A\}.\]
    Say that $X \in [A]^{\omega^*}_c$ is \emph{almost $i$-canonised} if $X \setminus \{x_0,x_1\}$ is $i$-canonised, and associate each such $X$ with the quantity $d_2(X) = (\delta_i(x_2,x_3),\delta_i(x_3,x_4))$.
    \begin{claim}\label{omegastarcase:claim}
        For any $A \in [\L]^\tau$, there is $A' \in [A]^\tau$ with the following two properties:
        \begin{enumerate}
            \item There is at least one $X \in [A']^{\omega^*}_c$ which is almost $i$-canonised;
            \item Amongst all almost $i$-canonised $X \in [A]^{\omega^*}_c$ on which $d_2(X)$ is minimal, there is a \emph{leftmost} one (i.e.\ one which is $<$-minimal).
        \end{enumerate}
    \end{claim}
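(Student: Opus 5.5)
The plan is to handle the two properties of the claim separately. Throughout, A has already been replaced by a copy A' \in [A]^\tau satisfying property 1, and I read property 2 as referring to [A']^{\omega^*}_c. I expect property 1 to be routine and property 2 to be the real obstacle.

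\emph{Property 1.} Since $\tau$ is well-orderable, fix an isomorphism $f : \tau \to A$. This makes all the choices below canonical, so no choice is needed. In the present case $\tau$ has a condensation class ordered as $\omega^*$ or $\zeta$, so $A$ contains a convex $X \in [A]^{\omega^*}_c$: take the whole condensation class, or its $\omega^*$-part if it is of type $\zeta$. By Ramsey's theorem for pairs on the countable set $X$, with the colour recording whether $i$ preserves order on the pair, we thin $X$ to an infinite $X_1$ of type $\omega^*$ on which $i$ is order-preserving or order-reversing. Observation \ref{canonisation:obs} then thins $X_1$ further to a canonised $X_2$. Now replace $X$ by $X_2$, i.e.\ put $A' \coloneqq (A \setminus X) \cup X_2$. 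Since $X$ is a full condensation class, or the full $\omega^*$-part of one, substituting an order of the same type $\omega^*$ for it gives $A' \cong A$. The set $X_2$ is convex in $A'$, and it is $i$-canonised, hence in particular almost $i$-canonised.

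\emph{Property 2, first step.} The values $d_2(X)$, for $X$ almost $i$-canonised in $[A']^{\omega^*}_c$, lie in $\alpha\times\alpha$ under the lexicographic order, which is a well-order. So the minimum $m$ exists, and the set $S$ of minimisers is non-empty. The whole difficulty is that $S$ need not have a $<$-leftmost element. I will use two structural facts.
\begin{enumerate}
    \item A convex $\omega^*$-subset of $A'$ is determined by its top element $x_0$ together with the condensation class containing it. Distinct members of $S$ are therefore either nested tails of one another or disjoint.
    \item For nested tails, deleting top elements strictly increases $d_2$, because $\delta_i(x_n,x_{n+1})$ is increasing along a canonised sequence. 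Hence within one condensation class at most one member of $S$ occurs, namely the maximal one.
\end{enumerate}
So $S$ consists of members lying in distinct condensation classes, and if there is no leftmost one, then (using the well-ordering induced by $f$) there is a canonically chosen strictly $<$-descending sequence $X^0 > X^1 > \cdots$ of members of $S$.

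\emph{Property 2, main step.} Here I would use that $\tau$ is strictly indecomposable to the right (Fact \ref{jullien:fact}). Pick one minimiser $X^\ast \in S$. The final segment of $A'$ consisting of $X^\ast$ together with everything above it contains a copy of $\tau$. I would try to choose that copy $A''$ so that it contains $X^\ast$ (or a tail of it) as a convex $\omega^*$-block at the bottom of the $\omega^*$-part of its lowest relevant condensation class. Then every almost $i$-canonised convex $\omega^*$-subset of $A''$ lies at or above $X^\ast$, so any new minimiser is automatically bounded below and a leftmost one exists. The delicate points are these.
\begin{enumerate}
    \item Thinning to $A''$ can destroy convexity of old blocks and create new convex $\omega^*$-blocks, so the minimum value may change. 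This is harmless, since we only need some leftmost minimiser.
    \item One must check that $A''$ really is isomorphic to $\tau$ while keeping $X^\ast$ convex.
\end{enumerate}
If this direct choice fails, my fallback is to iterate the restriction, passing each time to the final segment above the current leftmost candidate. I would then argue that the process terminates after finitely many steps, because the minimal value of $d_2$ cannot decrease indefinitely in the well-order $\alpha \times \alpha$. I expect making one of these two routes rigorous to be the main obstacle.
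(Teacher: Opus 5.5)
Your argument for property 1 is fine and is essentially the paper's. The paper keeps the two top points of a convex $\omega^*$-block and canonises the rest; canonising the whole block works just as well.

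The gap is in property 2, where neither of your routes produces a leftmost minimiser.

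In the main route, a copy $A''$ of $\tau$ inside the final segment $F$ generated by $X^\ast$ need not contain a tail of $X^\ast$ at all. If $\tau$ has a least element, that element of $A''$ lies above some $x_m$, so $A''\cap X^\ast$ is finite. This happens, for instance, for $\tau=\omega+\zeta+\zeta+\cdots$, which satisfies every hypothesis of Case~1.

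Even when such an $A''$ exists, the inference ``every block of $A''$ lies at or above $X^\ast$, so a leftmost minimiser exists'' does not follow. You explicitly allow the minimum to drop after thinning. The new minimisers then lie in distinct condensation classes above $X^\ast$, and they can form a strictly $<$-descending sequence with no least member. That is exactly the original problem.

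The fallback iteration has no progress measure. The bad case is precisely when the minimal value of $d_2$ does \emph{not} decrease, yet no leftmost witness exists. In that case the well-foundedness of $\alpha\times\alpha$ gives nothing. Also, passing to ``the final segment above the current leftmost candidate'' presupposes the conclusion.

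The missing idea is a local surgery. It needs no indecomposability and uses exactly your two structural facts.
\begin{itemize}
\item Fix one minimiser $X$, and for every minimiser $Z<X$ delete $z_3$.
\item By your facts, these $Z$ lie in pairwise distinct condensation classes, so the deletions do not interact.
\item Each deletion merely replaces a convex $\omega^*$-interval by an $\omega^*$-subset, so the result is still of type $\tau$.
\item Everything involved is definable, so no choice is used.
\end{itemize}
Now check the $d_2$-values.
\begin{itemize}
\item By canonisation of $Z\setminus\{z_0,z_1\}$, the block $Z\setminus\{z_3\}$ has $d_2$-value $(\delta_i(z_2,z_3),\delta_i(z_4,z_5))$. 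This is lexicographically above $d_2(Z)=d_2(X)$.
\item The new blocks with top $z_1$ or $z_2$ likewise have larger values.
\item A new block $W$ whose top lies above $z_0$ cannot be almost $i$-canonised. If it were, $W\cup\{z_3\}$ would be an almost $i$-canonised block of the old copy having $Z$ as a proper tail. It would then have $d_2$-value strictly below $d_2(Z)$, a contradiction.
\end{itemize}
Hence $X$ becomes the leftmost minimiser.

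The paper also first passes to a copy $A^\dagger$ that minimises the least attained $d_2$-value over all subcopies. This makes the absence of smaller values immediate, and it is what the later density argument for $\D$ relies on.
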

    \begin{claimproof}
        First observe that for any $A \in [\L]^\tau$, there is $A^* \in [A]^\tau$ with at least one $X \in [A^*]^{\omega^*}_c$ which is almost $i$-canonised: let $Y = \{y_n : n \in \omega\} \in [A]^{\omega^*}_c$ be arbitrary. Then $Y \setminus \{y_0, y_1\}$ is also ordered as $\omega^*$ in $\L$, and as such there is some $Y' \in [Y]^{\omega^*}$ on which $i$ is order-preserving or order-reversing, e.g.\ by Ramsey's theorem. Reduce further to some $Y'' \in [Y']^{\omega^*}$ such that $i \im Y''$ is canonised. Set
        \[A^* \coloneqq (A \setminus Y)\cup Y'' \cup \{y_0,y_1\}.\]
        Then $Y'' \cup \{y_0,y_1\} \in [A^*]^{\omega^*}_c$ is almost $i$-canonised. Write \[\mathcal A \coloneqq\{B \in [A]^\tau : \text{at least one }X \in [B]^{\omega^*}_c\text{ is almost }i\text{-canonised}\};\] it follows from the above that $\mathcal A$ is non-empty. For $B \in \mathcal A$, write
        \[\dmin_2(B) \coloneqq \min \{d_2(X) : X \in [B]^{\omega^*}_c\text{ almost }i\text{-canonised}\},\]
        and let $A^\dagger$ be an element of $\mathcal A$ which minimises $\dmin_2(A^\dagger)$. Then there is at least one almost $i$-canonised $X \in [A^\dagger]^{\omega^*}_c$ such that $d_2(X) = \dmin_2(A^\dagger)$ is minimal, but this is not necessarily unique. We further reduce to $A' \in [A^\dagger]^\tau$ with $\dmin_2(A') = \dmin_2(A^\dagger)$, with a leftmost witness $X$, in the following way: pick some almost $i$-canonised $X \in [A^\dagger]^{\omega^*}_c$ with $d_2(X) = \dmin_2(A^\dagger)$, and reduce to $A' \in [A^\dagger]^\tau$ by, for all almost $i$-canonised $Z \in [A^\dagger]^{\omega^*}_c$ with $d_2(Z) = \dmin_2(A^\dagger)$ and $Z < X$, removing $z_3$. Observe that this is well-defined, as for any $Z_0, Z_1 \in [A^\dagger]^{\omega^*}_c$, if $d_2(Z_0) = d_2(Z_1)$ then either $Z_0 = Z_1$ or $Z_0 \cap Z_1 = \emptyset$, and $A'$ still has order type $\tau$ as we have simply replaced some of its intervals ordered as $\omega^*$ by proper subsets also ordered as $\omega^*$.
        
        Now $X \in [A']^{\omega^*}_c$ still has $d_2(X) = \dmin_2(A') = \dmin_2(A^\dagger)$, which is minimal, but for any other almost $i$-canonised $Z \in [A^\dagger]^{\omega^*}_c$ with $d_2(Z) = d_2(X)$ and $Z < X$, $Z$ has been replaced by $Z' \coloneqq Z \setminus \{z_3\}$, and $d_2(Z') = (\delta_i(z_2,z_4),\delta_i(z_4,z_5))$. This $d_2(Z')$ is strictly above $d_2(X)$ in the lexicographic ordering of $\alpha \times \alpha$; this is because $\delta_i(z_2,z_4) = \min\{\delta_i(z_2,z_3),\delta_i(z_3,z_4)\} = \delta_i(z_2,z_3)$, and $\delta_i(z_4,z_5)$ must be strictly greater than $\delta_i(z_3,z_4)$, by the $i$-canonisation of $Z \setminus \{z_0,z_1\}$. It follows that $A'$ is as described.
    \end{claimproof}

    Write $\mathcal E \subseteq [\L]^\tau$ for the set of those $A \in [\L]^\tau$ satisfying the property described in Claim \ref{omegastarcase:claim}; our dense set $\D \subseteq [\L]^\tau$ in this case is the set of those $A \in \mathcal E$ satisfying the further condition that $\dmin_2(A)$ is minimal in $\{\dmin_2(A'): A' \in [A]^\tau\cap \mathcal E\}$, and the distinguished interval of such an $A$ is the leftmost almost $i$-canonised $X \in [A]^{\omega^*}_c$ with $d_2(X)$ minimal.\hfill $\dashv_\text{Case 1}$

    \begin{case}
        All of the infinite condensation classes in $\tau$ are ordered as $\omega$.
    \end{case}

    In this case, the definitions of both the dense set and the distinguished interval are less complex. First note the following:
    \begin{claim}\label{omegacase:claim}
        $\tau$ contains two consecutive condensation classes ordered as $\omega$. 
    \end{claim}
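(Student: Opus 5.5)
We argue within Case 2: $\tau$ is a well-orderable, additively indecomposable scattered order type, strictly indecomposable to the right, with $\omega\omega^* \le \tau$ or $\omega^*\omega \le \tau$, and every infinite condensation class of $\tau$ is ordered as $\omega$. We want two condensation classes of $\tau$, each of type $\omega$, with one immediately following the other.

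First we show that $\tau$ has a condensation class of type $\omega$ which has an immediate successor class. Since every infinite condensation class has type $\omega$, the quotient $\tau/{\sim}$ is a well-orderable scattered order. Suppose no infinite class had an immediate successor class. The natural argument is to use $\omega\omega^* \le \tau$ or $\omega^*\omega\le\tau$. In either case $\tau$ contains infinitely many distinct infinite condensation classes: a copy of $\omega^*\omega$ or $\omega\omega^*$ cannot lie within finitely many classes, since each class is finite or of type $\omega$ and so contains no copy of $\omega^*$. Take one infinite class $K$. The set of classes above $K$ is non-empty: by strict right-indecomposability, $\tau \le$ the final segment above $K$, and that final segment contains infinite classes. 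Since $\tau$ is well-orderable, the classes above $K$ form a well-orderable linear order, but that order need not have a least element.

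So I would instead argue via the quotient. The condensation $\tau/{\sim}$ has no two adjacent finite classes, because adjacent finite classes would merge into one. Hence between any two distinct classes there lie further classes, or they are adjacent with at least one of them infinite. Now fix an infinite class $K$ and write $\tau = I + F$, where $I$ is the initial segment up to and including $K$. Right strict indecomposability gives $\tau \le F$.

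Next consider an immediate successor of $K$ in the quotient. If it exists and is finite, its own successor (if it exists) must be infinite. That would give $\omega + n + \omega$ rather than two consecutive $\omega$ classes, so adjacency alone does not suffice.

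The key step, which I expect to be the main obstacle, is therefore to show that the quotient order cannot be dense-like. It must contain two consecutive elements which are both infinite classes. Since $\tau$ is scattered, the quotient $\tau/{\sim}$ is scattered and nonempty, and it is not a single point because there are infinitely many classes. Iterating the finite condensation on the quotient, I would take a condensation class of $\tau/{\sim}$ containing at least two elements; such a class exists since a scattered order with more than one point is not dense.

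Within such a class, consecutive quotient elements alternate finite/infinite at worst. If this alternation always happens, then every finite class is flanked by infinite ones, and the pattern $\omega + n + \omega$ occurs. In that case I would use the copy of $\omega\omega^*$ or $\omega^*\omega$ together with indecomposability: replacing $\tau$ by an order obtained by deleting the finite classes gives a suborder of $\tau$ of the same structure, and self-embeddings transfer the pattern. I would try to derive a contradiction with $\tau \le F$ by comparing the least "gap" structure above $K$.

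I'm uncertain here and expect the real proof to use the specific form of indecomposable scattered types, namely Laver/Hausdorff sums $\sum_{\omega}$ or $\sum_{\omega^*}$ of smaller indecomposables. Following that route, write $\tau$ as an $\omega$-sum or $\omega^*$-sum of indecomposables, using the well-orderable hypothesis to avoid Choice. Since the only infinite classes are of type $\omega$, the base case $\omega$ appears. Then check by induction on Hausdorff rank that any indecomposable scattered type of rank at least $2$ whose infinite classes are all $\omega$ contains a summand $\omega\cdot k$ with $k\ge 2$, i.e.\ some $\omega + \omega$ occurring as two consecutive classes.
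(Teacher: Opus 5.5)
Your proposal does not reach a proof. After two abandoned attempts it ends with a strategy whose key step is only asserted, and the obstacle it stalls on does not exist.

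\textbf{The false obstacle.} The mistake is the scenario of consecutive classes of shape $\omega + n + \omega$, with finite classes ``alternating'' with $\omega$-classes. In Case 2 every condensation class, finite or of type $\omega$, has a least element. So if a class $C$ with a maximum were immediately followed by a class $D$, then $\max C$ and $\min D$ would be adjacent, hence $\sim$-equivalent, forcing $C = D$. Consequently a finite class has no immediate successor class at all. In particular, if $X_0 < X_1$ are $\omega$-classes, the set of points strictly between them is either empty or infinite: a finite non-empty such set would already be part of $X_1$. This also explains why applying non-density to the whole quotient $\tau/{\sim}$ cannot finish the job. The adjacent pair it yields may just be an $\omega$-class followed by a finite class.

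\textbf{The missing idea.} Apply scatteredness to the suborder of $\omega$-classes alone. Suppose two $\omega$-classes $X_0 < X_1$ are not consecutive. Then the interval strictly between them is non-empty, hence infinite by the above. An infinite scattered order has an infinite condensation class: if all its classes were finite, no two would be adjacent, and their least elements would form a densely ordered suborder. Since this interval is a union of whole classes of $\tau$, this gives an $\omega$-class of $\tau$ strictly between $X_0$ and $X_1$. Hence if no two $\omega$-classes were consecutive, the $\omega$-classes would be densely ordered. Their least elements would then be a dense suborder of $\tau$, contradicting scatteredness. This is the paper's short argument, and it needs no structure theory.

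\textbf{Two further problems.} Your fallback via Hausdorff/Laver decompositions cannot substitute for this argument. The induction on rank is only asserted. Moreover, the $\omega$/$\omega^*$-sum form of indecomposables is the countable case of the theory, whereas $\tau$ is only assumed well-orderable. Already in \s{ZFC}, $(\omega\omega^*)\omega_1$ is a Case 2 type that is not an $\omega$- or $\omega^*$-sum of strictly smaller indecomposables.

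Separately, your justification for infinitely many \emph{infinite} classes only shows that infinitely many classes meet a copy of $\omega\omega^*$ or $\omega^*\omega$. To get the two needed $\omega$-classes, use that copy to cut $\tau$ into two infinite intervals, and apply the fact above to each piece.
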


    \begin{claimproof}
        First note that since $\tau$ is scattered, every infinite interval of $\tau$ contains an infinite condensation class; an infinite order whose condensation classes are all finite is non-scattered, as e.g.\ the suborder consisting of the leftmost element of each condensation class is densely ordered. Since $\omega\omega^* \le \tau$ or $\omega^*\omega \le \tau$, it can certainly be split into multiple infinite intervals, each of which will contain at least one infinite condensation class.
        
        Fix some $A$ ordered as $\tau$ and let $X_0 < X_1$ be distinct condensation classes of $A$ ordered as $\omega$. If $X_0$ and $X_1$ are not consecutive, there is a non-empty interval of $A$ between them, which must be infinite, as otherwise it would be part of $X_1$; we therefore have that some condensation class between $X_0$ and $X_1$ is infinite, i.e.\ ordered as $\omega$. It follows that given any two condensation classes of $\tau$ ordered as $\omega$, either they are consecutive or there is another condensation class ordered as $\omega$ between them; if no two condensation classes ordered as $\omega$ are consecutive, it follows that these classes are densely ordered, contradicting the scatteredness of $\tau$.
        \end{claimproof}

        We now claim that any $A \in [\L]^\tau$ can be reduced to an $A' \in [A]^\tau$ with the following properties:

        \begin{enumerate}
            \item There is at least one $i$-canonised $X \in \ccomega(A')$ which is immediately followed by another element of $\ccomega(A')$;
            \item Let $\mathcal X(A')\subseteq \ccomega(A')$ denote the set of all $X$ as described in condition 1: then amongst those $X = \{x_n : n \in \omega\} \in \mathcal X(A')$ with $\delta_i(X) \coloneqq \delta_i(x_0,x_1)$ minimal, there is a leftmost such $X$.
        \end{enumerate}
        Condition 1 is easy to satisfy, as given any $X \in \ccomega(A)$ immediately followed by another element of $\ccomega(A)$ we can simply reduce $X$ to some $X' \in [X]^\omega$ which is $i$-canonised. For condition 2, suppose there are multiple $Z \in \mathcal X(A)$ all taking the same value of $\delta_i(Z)$. Pick one, $X$, say, and reduce to $A' \in [A]^\tau$ by, for each of these $Z < X$, removing $z_0$; then for any such $Z$, $Z \setminus \{z_0\} \in \ccomega(A')$ has $\delta_i(Z') = \delta_i(z_1,z_2)$, which is strictly greater than $\delta_i(z_0,z_1) = \delta_i(Z)$ as $Z$ is $i$-canonised by assumption.

        Let us write $\mathcal E \subseteq [\L]^\tau$ for the set of all $A \in [\L]^\tau$ satisfying conditions 1 and 2 above, and write $\deltaimin(A) \coloneqq \min\{\delta_i(X) : X \in \mathcal X(A)\}$. Our dense set $\D$ in this case is the set of all those $A \in \mathcal E$ with the additional property that 
        \[
        \deltaimin (A) = \min\{\deltaimin(A') : A' \in [A]^\tau \cap \mathcal E\};
        \] the distinguished interval of such an $A$ is the condensation class $X$ described in condition 2. \hfill $\dashv_\text{Case 2}$

        We are now ready to define our colouring $c : \D \to 2$. For $A \in \D$, let us write $\di(A)$ for the distinguished interval of $A$, and write $A^+ \coloneqq \{x \in A: x > \di(A)\}$ for the final segment of $A$ above its distinguished interval. In each case, $\di(A)$ is immediately followed by (at least) two consecutive elements of $A$; write these as $x_A < y_A$. Now, if $A \in \D$ is such that $i$ is neither order-preserving nor order-reversing on $A^+$, set
        \[c(A) \coloneqq \begin{cases*}
            0 & if $i(x_A) \lex i(y_A)$;\\
            1 & if $i(x_A) >_{\text{lex}} i(y_A)$.
        \end{cases*}\]

        If instead $i$ is order-preserving or order-reversing on $A^+$, set
        \[c(A) = C(i \im A^+),\]
        where $C$ is the colouring defined in \cite[Theorem 2]{higheranalogues}, which simultaneously witnesses that $\twoalphalex \centernot \rightarrow (\varphi)^\varphi$ for all well-orderable scattered $\varphi$ with $\omega\omega^* \le \varphi$ or $\omega^*\omega \le \varphi$.

        \begin{claim}\label{pickoutanyxy:claim}
            Let $A \in \D$ and let $x< y \in A^+$. Then there exists $A' \in [A]^\tau \cap \D$ with $x = x_{A'}$, $y = y_{A'}$.
        \end{claim}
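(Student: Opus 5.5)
The plan is to build $A'$ by keeping everything of $A$ up to and including $\di(A)$, and then replacing $A^+$ by a copy of $A^+$ whose first two elements are exactly $x$ and $y$.

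\textbf{Step 1: a copy of $A^+$ starting at $x,y$.} Write $\varphi$ for the order type of $A^+$. Since $\tau$ is strictly indecomposable to the right, I first want to show that $\varphi$ is also strictly indecomposable to the right. In fact I would show that any final segment of $A^+$, in particular $\{z \in A : z > y\}$, contains a copy of $\tau$, and hence a copy of $\varphi$. This holds because $A$ decomposes as $\{z \in A: z \le y\} + \{z \in A : z > y\}$, and strict right indecomposability applies to this decomposition. Choose $W \in [\{z \in A : z > y\}]^\varphi$. Then $\{x,y\} \cup W$ has type $2+\varphi$. I still need this to have type $\varphi$ with $x,y$ as its two least elements. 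So I instead take $W \in [\{z\in A: z>y\}]^\tau$ and check that $\{x,y\}\cup W$, which has type $2+\tau$, embeds back suitably. The cleanest route is to note that $\varphi$ starts with two consecutive points, because $x_A<y_A$ are its first two elements, and that in both cases $\varphi = 2+\varphi'$ with $\varphi' \cong$ type of $\{z>y_A\}$, which contains a copy of $\tau \ge \varphi'$. So I choose $W$ of type $\varphi'$ inside $\{z \in A : z>y\}$ and set $A' \coloneqq \{z\in A: z\le \di(A)\}\cup\di(A)\cup\{x,y\}\cup W$. Then $A'\cong A$, so $A'\in[A]^\tau$, and $x,y$ are the two elements immediately after $\di(A)$ in $A'$.

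\textbf{Step 2: $A'\in\D$ with the same distinguished interval.} This is the main obstacle. In Case 2, $\di(A)$ is still a condensation class of $A'$ ordered as $\omega$. It is still $i$-canonised, and it is followed by an $\omega$-class, because $\{x,y\}\cup W$ begins with $2$ followed by the image of the next $\omega$-class. So $\di(A)\in\mathcal X(A')$. Minimality of $\deltaimin$ over $[A]^\tau\cap\mathcal E$ gives $\deltaimin(A')\ge\deltaimin(A)$, and the presence of $\di(A)$ gives equality.

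I must also ensure that $A'\in\mathcal E$ and that $\di(A)$ is still the leftmost minimiser. Classes of $A'$ to the left of $\di(A)$ are unchanged. New classes of $A'$ to the right are irrelevant for leftmostness. Condition 2 requires only the existence of a leftmost minimiser, and $\di(A)$ is a minimiser with no minimiser to its left, because the left part is unchanged and $\di(A)$ was leftmost in $A$. Finally, $A'\in\D$ because $[A']^\tau\cap\mathcal E\subseteq[A]^\tau\cap\mathcal E$, so the minimum over it cannot be smaller.

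Case 1 goes the same way with $d_2$ and $\dmin_2$. The only extra point is that the convex almost $i$-canonised sets to the left of $X=\di(A)$ are unchanged. Any new such sets inside $\{x,y\}\cup W$ lie to the right of $X$, and $X$ remains convex in $A'$. The one thing to verify is that the copy $W$ does not create, just after $\di(A)$, an element that would be absorbed into $\di(A)$'s class. This is fine: in Case 2 the point immediately after $\di(A)$ is $x$, and $x$ begins a new class. In Case 1, $X$ is an $\omega^*$-set, so it is bounded above by $x$ and unaffected.
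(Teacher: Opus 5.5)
Your proposal is correct and follows essentially the paper's proof. You keep $A\setminus A^+$ fixed and replace $A^+$ by $\{x,y\}\cup W$, where $W\subseteq A\cap(y,\rightarrow)$ is a copy of the tail of $A$ above $y_A$ (available by strict right indecomposability); then the minimality built into $\D$ shows $\di(A')=\di(A)$, so $x_{A'}=x$ and $y_{A'}=y$.

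One point to tighten: you apply minimality over $[A]^\tau\cap\mathcal E$ to $A'$ before you know $A'\in\mathcal E$, which is circular as written. Argue instead that a witness in $A'$ with value below $\deltaimin(A)$ (resp.\ $\dmin_2(A)$) could be turned into an element of $[A]^\tau\cap\mathcal E$ with smaller value, contradicting $A\in\D$; use the same deletion trick as in the density argument for $\mathcal E$. In Case 1 such a witness could also be a convex $\omega^*$-set overlapping $\di(A)$, not just one lying to its right.
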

        \begin{claimproof}
            Let us write $\tau^+$ for the order type of $A^+$; we find our required $A'$ by reducing $A^+$ to some $B \in [A^+]^{\tau^+}$ and leaving $A \setminus A^+$ untouched. Observe that any $A' \in [A]^\tau$ of this form is necessarily still in $\D$, and has the same distinguished interval as $A$: In case 1, $B$ may contain new almost $i$-canonised intervals ordered as $\omega^*$, $Z$, say; by our assumption that $\dmin_2 (A)$ was already minimal, however, it must be the case that $d_2(Z) \ge d_2(\di(A))$ for any such $Z$, and so $\di(A)$ is still the leftmost almost $i$-canonised element of $[A']^{\omega^*}_c$ with $d_2(\di(A))$ minimal; it follows that $A' \in \D$ and $\di(A') = \di(A)$. Similarly, in case 2, any $Z \in \mathcal X(B)$ must have $\delta_i(Z) \ge \delta_i(\di(A))$, and so $\di(A') = \di(A)$.
            
            We find our desired $B$ like so. Since $\tau$ is strictly indecomposable to the right, $A^+ \cap (y,\rightarrow)$ contains isomorphic copies of any final segment of $A$; let $B \in [A^+]^{\tau^+}$ be of the form $B = \{x,y\} \cup B'$, where $B' \subseteq A^+ \cap (y,\rightarrow)$. Then set $A' \coloneqq (A\setminus A^+) \cup B$. Now $\di(A') = \di(A)$, and since $\di(A)$ is immediately followed by $x$ and $y$ in $A'$, we have that $x_{A'} = x$ and $y_{A'} = y$.
        \end{claimproof}
        We now show that no $A \in \D$ can be homogeneous for $c$. Let $A \in \D$ be arbitrary. If for some $A' \in [A]^\tau \cap \D$, $i$ is either order-preserving or order-reversing on $A'^+$, reduce to $A'$; then since $i \im A'^+$ is not homogeneous for $C$, we can find some $B \subseteq i \im A'^+$ with the same order type under $\lex$ but with $C(B) \neq C(i \im A'^+)$; then $A'' \coloneqq (A \setminus A')\cup i^{-1}(B)$ has $c(A'') = C(B) \neq c(A')$, and so $A$ is not homogeneous for $c$.
        
        So suppose instead that no such $A'$ exists. In particular, since $i$ is neither order-preserving nor order-reversing on $A^+$, we can find $x_0 < y_0$, $x_1 < y_1$ in $A^+$ such that $i(x_0) <_\text{lex} i(y_0)$ and $i(x_1) >_\text{lex} i(y_1)$. Then by Claim \ref{pickoutanyxy:claim}, we can find $A_0, A_1 \in [A]^\tau \cap \D$ with $x_{A_0} = x_0$, $y_{A_0} = y_0$, $x_{A_1} = x_1$, and $y_{A_1} = y_1$; then $c(A_0) = 0$ and $c(A_1) = 1$, and so once again $A$ is not homogeneous for $c$.
\end{proof}
\begin{proof}[Proof of Theorem \ref{omegaomegastarkw:thm}]
\setcounter{scratch}{\value{thmcount}}
\setcounter{thmcount}{\getrefnumber{omegaomegastarkw:thm}}
\setcounter{claimcount}{0}
    Let $\tau$ be a well-orderable scattered order type with $\omega\omega^* \le \tau$ or $\omega^*\omega \le \tau$. We appeal to the following result of Laver:
    \begin{subfact}\label{sumofindecs:fact} \textnormal{(\!\!\cite{laver})}
        Any scattered linear order can be written as a finite sum of indecomposable scattered linear orders.\footnote{See the footnote after Fact \ref{jullien:fact}.}
    \end{subfact}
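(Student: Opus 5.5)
The plan is a minimal-counterexample argument, with well-foundedness of the embeddability relation as the engine. Laver's resolution of Fra\"iss\'e's conjecture gives that the class of scattered linear orders is well-quasi-ordered under $\le$; in fact it is better-quasi-ordered. In particular there is no strictly $\le$-descending $\omega$-sequence of scattered order types, so every non-empty class of scattered orders has a $\le$-minimal element, meaning an order below which no member of the class lies strictly. This is the one deep input, and I will take it as given.

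Suppose the statement fails, and let $\L$ be a $\le$-minimal scattered order which is not a finite sum of indecomposable scattered orders.
\begin{enumerate}
\item Since $\L$ is not itself indecomposable, write $\L = \mathbb A + \mathbb B$ with $\L \not\le \mathbb A$ and $\L \not\le \mathbb B$.
\item Both summands embed in $\L$, so each is strictly below $\L$ in the embeddability order. Each is scattered, being a suborder of a scattered order.
\item By minimality, $\mathbb A$ and $\mathbb B$ are each finite sums of indecomposable scattered orders.
\item Concatenating the two finite decompositions expresses $\L$ as such a sum, which is a contradiction.
\end{enumerate}
The empty order and the one-point order are indecomposable, so there is no degenerate base case to handle separately.

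The main obstacle is the well-foundedness step, and specifically its status in \s{ZF}. Laver's theorem is a \s{ZFC} result, and the minimal-counterexample argument needs it to hold in the ambient model. For the application here we only need the statement for well-orderable scattered orders. Fix such an $\L$ on an ordinal domain and pass to the inner model $L[\L]$, which satisfies \s{ZFC}. Scatteredness of a well-orderable order is absolute, since a dense suborder of one of ordinal domain can be found in $L[\L]$ by building it level by level along the well-order. Also, any decomposition into finitely many intervals found in $L[\L]$ is a decomposition in $V$.

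One detail needs checking before transferring: indecomposability of each summand must itself survive the move from $L[\L]$ to $V$. This is the footnote referred to in the statement. I would check it as follows. An indecomposable scattered order is, by Jullien's Fact \ref{jullien:fact}, strictly indecomposable to one side. The witnessing embeddings are needed for each cut, but there are only set-many cuts of a well-ordered domain, and the existence of these embeddings can be arranged in $L[\L]$ and then used in $V$. I expect this bookkeeping, rather than the combinatorial core, to be where care is needed.
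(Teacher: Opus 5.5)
Your \s{ZFC} core is correct. Deriving the fact by a minimal counterexample from the well-foundedness of embeddability on scattered orders (a consequence of Laver's wqo theorem) is the standard argument. The paper itself gives no proof beyond the citation to Laver and a pointer to another paper for the \s{ZF} caveat.

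There are two housekeeping points.
\begin{enumerate}
\item To get a minimal counterexample legitimately, work inside the \emph{set} of suborders of one fixed counterexample; summands of a suborder are again suborders. In \s{ZFC}, the absence of descending chains then yields a minimal element.
\item In the transfer you only need the trivial direction: scatteredness passes down from $V$ to $L[\L]$, since a dense suborder lying in $L[\L]$ is also one in $V$. Your ``level by level'' sketch of the converse would need a real absoluteness argument, e.g.\ ill-foundedness of the tree of finite partial embeddings of $\Q$ into $\L$. Nothing depends on it.
\end{enumerate}

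The step that does not work as written is the transfer of indecomposability. The number of cuts (``set-many'') is not the issue. Indecomposability in $V$ quantifies over every cut $S = A + B$ of a summand $S$ that exists in $V$. Such a cut is an arbitrary initial segment, essentially a subset of an ordinal, and it need not belong to $L[\L]$. Indecomposability in $L[\L]$ only provides embeddings for cuts that lie in $L[\L]$.

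Jullien's fact is the right tool, but you should use it to reduce to \emph{principal} cuts. In $L[\L]$, suppose $S$ is strictly indecomposable to the right. Each set $(\leftarrow,b)\cap S$ lies in $L[\L]$, so for every $b\in S$ there is an embedding $f_b\in L[\L]$ of $S$ into $[b,\rightarrow)\cap S$. Now take any cut $S = A + B$ in $V$. If $B=\emptyset$, then $S=A$. Otherwise pick $b\in B$; then $[b,\rightarrow)\cap S\subseteq B$, so $f_b$ witnesses $S\le B$.

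Hence $S$ is (strictly right) indecomposable in $V$, which completes your transfer. Alternatively, you could show by induction along Hausdorff's analysis in $L[\L]$ that a scattered order acquires no new cuts in $V$, but the principal-cut argument is shorter.
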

    We can therefore decompose $\tau$ as a sum $\tau = \tau_0 + \tau_1 + \dots + \tau_n$, where each $\tau_k$ is indecomposable; fix some such decomposition. Given a copy of $\tau$, there are potentially many different ways of decomposing it as an interval ordered as $\tau_0$ followed by an interval ordered as $\tau_1$, etc. Whichever of $\omega\omega^*$ or $\omega^*\omega$ embeds in $\tau$ must embed in at least one $\tau_k$; fix some $m$ such that $\omega\omega^* \le \tau_m$ or $\omega^*\omega \le \tau_m$. We define our colouring on a dense subset $\D \subseteq [\L]^\tau$ such that for any $A \in \D$, it is not necessarily possible to pick out an interval of $A$ ordered as $\tau_m$, but it is possible to pick out a final segment of an interval of $A$ ordered as $\tau_m$.
    
    Since $\tau_m$ is scattered and indecomposable, it is either strictly indecomposable to the right or to the left, and since it is scattered and infinite, it must have infinite condensation classes. Wlog $\tau_m$ is strictly indecomposable to the right. For $A \in [\L]^\tau$, $X \in [A]^\omega_c \cup [A]^{\omega^*}_c$, we associate each $X$ with the quantity $d_0(X) = (\delta_i(x_0,x_1),\delta_i(x_1,x_2))$. % $\delta(X) = \langle\delta_i(x_0,x_1),\delta_i(x_1,x_2)\rangle$.
    Now define $\mathcal E \subseteq [\L]^\tau$ to be the set of all $A \in [\L]^\tau$ with the following properties:
    \begin{enumerate}
        \item There is some $i$-canonised $X \in [A]^\omega_c \cup [A]^{\omega^*}_c$ such that for some decomposition $A = A_0 \cup A_1 \cup \dots \cup A_n$ of $A$ into consecutive intervals ordered as $\tau_0, \tau_1, \dots, \tau_n$, respectively, $X \in A_m$;
        \item Amongst all $X$ as described in condition 1 with $d_0(X)$ minimal, there is a leftmost such $X$.
    \end{enumerate}
    Observe that $\mathcal E$ is dense: given any $A \in [\L]^\tau$, we may decompose it into pieces $A_0 \cup A_1 \cup \dots \cup A_n$ and then $i$-canonise some $\omega$- or $\omega^*$-interval of $A_m$, yielding condition 1; given multiple such $i$-canonised $Z \in [A]^\omega_c \cup [A]^{\omega^*}_c$ with $d_0(Z)$ minimal, we may fix some $X$ and remove $z_1$ for each such $Z < X$ to attain condition 2.

    Now, for any $A \in \mathcal E$, let $X = X(A)$ be the interval guaranteed by condition 2; then there is \emph{some} decomposition of $A$ into consecutive intervals ordered as $\tau_k$ such that $X$ is in the piece ordered as $\tau_m$. We cannot in general determine this decomposition from $X$ alone, nor even the interval ordered as $\tau_m$, but since $\tau_m$ is strictly indecomposable to the right, $X$ is immediately followed in $A$ by a final segment of a copy of $\tau_m$ (in particular, $X$ is not cofinal in $\tau_m$, as $\omega < \tau_m$ and $\tau_m$ is strictly indecomposable to the right), and this final segment is uniquely determined; it is the set $\operatorname{fi}(A) \coloneqq \{a \in A: a > X \text{ and }\tau_m \not \le [x_0, a]_A\}$.

    We therefore have a canonical way of picking out an interval of $A$ ordered as some type equimorphic with $\tau_m$, $\operatorname{fi}(A)$; we would now like to simply apply the colouring from the proof of Lemma \ref{indecscatteredkw:lemma} to $\operatorname{fi}(A)$. In order for this to work, we need to reduce further to another dense subset of $\mathcal E$: for $A \in \mathcal E$, write $\dmin_0(A) \coloneqq d_0(X(A))$, and let $\D \subseteq \mathcal E$ be the set of those $A \in \mathcal E$ such that $\dmin_0(A)$ is minimal amongst $\{\dmin_0(A') : A' \in [A]^\tau \cap \mathcal E\}$. Then for any $A \in \D$ and any $B \in [\operatorname{fi}(A)]^{\otp \operatorname{fi}(A)}$, $A' \coloneqq (A \setminus \operatorname{fi}(A))\cup B \in [A]^\tau \cap \D$ and $\operatorname{fi}(A') = B$. It follows that the colouring $c' : \D \to 2$ given by, for $A \in \D$,
    \[c'(A) = c(\operatorname{fi}(A)),\]
    where $c$ is the colouring defined in the proof of Lemma \ref{indecscatteredkw:lemma}, has no homogeneous set.
\end{proof}\setcounter{thmcount}{\value{scratch}}
Our final result in this section shows that the graph-theoretic IEPR shown consistent in Proposition \ref{graphconsistency:prop} yields a failure not just of \s{KWP}$_1$ but also of \s{O}.
\begin{prop}\label{randomgraph:prop}
    Let $G$ be a graph which is linearly orderable, i.e.\ whose vertex set is linearly orderable. Then
    \[G \centernot \rightarrow (R)^R,\]
    where $R$ is the random graph.
\end{prop}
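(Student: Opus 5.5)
The plan is to mimic the proof of Theorem \ref{countablenonscattered:thm}. Fix a linear order $<$ on the vertex set of $G$. The only structure we can use uniformly on copies $H \in [G]^R$ is the pair $(E\restriction H, <\restriction H)$. The chief difficulty is that, without Choice, a copy $H$ of $R$ carries no canonical enumeration even though it is countable. So the colouring must be defined from $(H,E,<)$ alone, and any "enumerate-and-choose" argument may only be run inside a fixed $H$, as in the proof of Lemma \ref{canonisationonq:lemma}. There the choices are made by minimal index, and the index comes from an enumeration that the order $<$ itself supplies.

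The dense set I would aim for is $\D = \D_0 \cup \D_1$. Here $\D_0$ consists of those $H \in [G]^R$ with $\langle H,<\rangle \cong \omega$ or $\omega^*$. For such $H$ the order gives a canonical enumeration $\langle h_n : n\in\omega\rangle$, and I set $c(H) = 0$ iff $h_0 \mathrel{E} h_1$. To see that no $H \in \D_0$ is homogeneous, fix a vertex $h_n$ and the vertices $h_m$ with $m>n$. Using the canonical enumeration, run the extension-property construction inside $\{h_k : k \geq m\}$, always taking the least index that works. This produces a copy of $R$ whose two first elements in the enumeration are $h_n, h_m$, for any prescribed pair $n<m$. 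The construction is choice-free and stays inside $\D_0$, since subsets of $\omega$-type sets are again of type $\omega$ or finite. Both colours therefore appear, because $H$ contains both edges and non-edges.

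The set $\D_1$ should consist of those copies on which the order is "generic", i.e.\ $\langle H,<\rangle$ is dense and for every finite configuration the extension property can be realised in every interval. For $\D_1$ I would define the colour by an isomorphism-invariant feature of the ordered graph, e.g.\ whether $H$ has a $<$-least element that is adjacent to all later elements in some cofinal way. This needs more care: I would first try a colouring that changes when one passes to a subcopy sitting inside an open interval $(a,b)_<$, which is again generic. Density of $\D$ is the step I expect to be the main obstacle. It asks for an analogue of Lemma \ref{canonisationonq:lemma} for graphs: every copy $H$ of $R$ contains a subcopy on which $<$ is of type $\omega$, $\omega^*$, or generic.

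To prove density I would follow Claims \ref{reducetoomega:claim} and \ref{isfsnonscattered:claim}. If every proper $<$-initial segment of $H$ fails to contain a copy of $R$, then, using indivisibility of $R$ under vertex partitions, the final segments each still contain copies of $R$. A cofinal $\omega$-sequence then lets us build a copy of type $\omega$ by the same back-and-forth, where the $n$-th vertex is chosen in the $n$-th final segment. The dual case gives $\omega^*$. Otherwise, after removing the canonically determined bad initial and final segments, every initial and final segment contains a copy of $R$, and a tree construction as in the proof of Lemma \ref{canonisationonq:lemma} yields a generic ordered copy. The canonical enumerations needed for the construction are available only after first fixing a countable subset with an enumeration, which is legitimate inside a single density argument since we only need existence. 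If the generic case proves too delicate, a fallback is to show density of $\D_0$ alone, by proving that every linear order on $R$ admits a subcopy of type $\omega$ or $\omega^*$. I suspect this is false, though, because of the generic ordered random graph, which is why I keep $\D_1$.
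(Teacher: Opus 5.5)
Your argument for $\D_0$ is fine, and simpler than you make it: $\{h_n,h_m\}\cup\{h_k : k>m\}$ is cofinite in $H$, so it is already a copy of $R$ lying in $\D_0$. The gap is everything else. The colouring on $\D_1$ is never defined. More seriously, the density statement the plan rests on is false: it is not true that every linear order on a copy of $R$ has a subcopy of type $\omega$, $\omega^*$ or ``generic''. So copies outside $\D_0\cup\D_1$ would need a colouring of their own, and that is where the real work lies.

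A generic subcopy would itself contain a subcopy of type $\omega$: greedily realise each extension requirement above everything chosen so far. So it suffices to give an ordering with no subcopy of type $\omega$ or $\omega^*$. Take vertices $v_n$ with codes $t_n\in{}^n2$, and put $v_mEv_n$ iff $t_n(m)=1$ for $m<n$. Arrange that every $s\in\lessomega$ has infinitely many extensions among the $t_n$; this gives the extension property. Order the vertices by $\lex$ on their codes, identifying $t$ with $t^\frown\langle1\rangle^\frown\langle0,0,\dots\rangle\in{}^\omega2$, which preserves $\lex$. A $\lex$-monotone $\omega$-sequence $\langle z_k\rangle$ of codes then converges in ${}^\omega2$. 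Hence for fixed $z_0$ the bit $z_k(|z_0|)$, which for large $k$ is the adjacency of $z_k$ to $z_0$, is eventually constant. But in a copy of type $\omega$ or $\omega^*$, $z_0$ needs infinitely many neighbours and non-neighbours among the $z_k$. So your suspicion that the fallback fails is right, but for the wrong reason: the Fra\"iss\'e-generic ordered random graph does contain type-$\omega$ subcopies.

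The missing idea is that the order need not be canonised at all. The features that resist canonisation can themselves serve as colours. The paper first colours $A$ by whether it has a $<$-least element, whenever $[A]^R$ contains copies both with and without one, and likewise for greatest elements. Split a copy at $a<b$ into $\{a\}\cup(b,\rightarrow)$ and its complement, and apply the pigeonhole property. This shows that a homogeneous copy can be shrunk to one in which every subcopy has a least element, or every subcopy has a greatest element.

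In the first case the maximal well-ordered initial segment is again a copy (pigeonhole once more). So you may pass to a well-ordered copy $A$ of minimal order type. Then for any $x<y$ in $A$, the set $\{x,y\}\cup(y,\rightarrow)$ is a copy, because its complement has smaller order type and so cannot be one. Colouring well-ordered copies by whether their two least elements are adjacent, and dually for anti-well-ordered copies, leaves no homogeneous set. Copies like the lexicographic example above are dealt with by the first, least/greatest-element, clauses of this colouring.
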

\begin{proof}
    Our proof uses some ideas from \cite[Theorem 2]{pigeonholeorders}, adapted to the choiceless setting. The key property of the random graph for the purposes of this proof is the following.
    \begin{subfact}
        The random graph $R$ has the \emph{pigeonhole property}, i.e.\ whenever (the vertex set of) $R$ is partitioned into finitely many disjoint pieces $R = R_0 \cup R_1 \cup \dots \cup R_n$, there is some $m$ such that $R_m \cong R$.
    \end{subfact}
     See e.g.\ \cite[Proposition 3]{cameronrandomgraph} for a proof; Choice is not used. Let $G$ be a graph with $[G]^R \neq \emptyset$, and let $<$ be a linear ordering on $G$. We build a colouring of $ c: [G]^R \to 2$ with no homogeneous set in the following way. The colour of $A \in [G]^R$ will be determined by its order-theoretic properties under the ordering $<$, and those of its subcopies of $R$. If $A$ is such that there are $A' \in [A]^R$ with a $<$-minimal element and $A'' \in [A]^R$ with no $<$-minimal element, set
     \[c(A) \coloneqq \begin{cases*}
         0 & if $A$ has a minimal element;\\
         1 & if $A$ has no minimal element.
     \end{cases*}\]
     If this is not the case, but $A$ is such that some $A' \in [A]^R$ has a maximal element and some $A'' \in [A]^R$ has no maximal element, set
     \[c(A) \coloneqq \begin{cases*}
         0 & if $A$ has a maximal element;\\
         1 & if $A$ has no maximal element.
     \end{cases*}\]
     It is clear from just this partial definition of $c$ that for $A \in [G]^R$ to be homogeneous for $c$, we can assume (by reducing if necessary) that either every $A' \in [A]^R$ has a minimal element or no $A' \in [A]^R$ has a minimal element, and similarly for maximal elements.

     We show first that in order for $A$ to be homogeneous, it cannot be the case that every $A' \in [A]^R$ has neither a minimal element nor a maximal element. Let $a < b \in A$; then define $A_0, A_1 \subseteq A$ by
     \begin{align*}
         A_0 &\coloneqq \{a\} \cup \{x \in A : b < x\};\\
         A_1 & \coloneqq A \setminus A_0.
     \end{align*}
     Then $A_0$ has minimal element $a$ and $A_1$ has maximal element $b$. By the pigeonhole property, at least one of $A_0$, $A_1$ is isomorphic to $R$, and so we obtain some $A_i \in [A]^R$ which has either a minimal element or a maximal element.

     If $A$ is homogeneous it therefore must either be the case that every element of $[A]^R$ has a minimal element or that every element of $[A]^R$ has a maximal element. 
     \begin{claim}
         Let $A$ be such that every element of $[A]^R$ has a minimal element. Then there exists $S \in [A]^R$ which is well-ordered by $<$.
     \end{claim}
     \begin{claimproof}
         Let $S$ be the maximal well-ordered $<$-initial segment of $A$; we claim that $S \in [A]^R$. To see this, observe that $A \setminus S$ has no minimal element; if $a = \min A \setminus S$, then $(\leftarrow, a] \cap A = S \cup \{a\}$ is well-ordered by $<$, as $S$ is well-ordered by $<$ and $a > S$, so $a \in S$, a contradiction. By the pigeonhole principle, one of $S$ or $A\setminus S$ is in $[A]^R$; since all elements of $[A]^R$ have a minimal element, this must be $S$.
     \end{claimproof}
     Similarly, if $A$ is such that every element of $[A]^R$ has a maximal element, there is some $S \in [A]^R$ which is anti-well-ordered by $<$.
     
     We now complete the definition of our colouring. If $A \in [G]^R$ is well-ordered by $<$, set
     \[c(A) \coloneqq \begin{cases*}
         0 & if $a_0Ea_1$;\\
         1 & if $\lnot a_0Ea_1$,
     \end{cases*}\]
     where $a_0$, $a_1$ are the first two elements of $A$ under $<$; similarly, if $A$ is anti-well-ordered by $<$, set
     \[c(A) \coloneqq \begin{cases*}
         0 & if $a_0Ea_1$;\\
         1 & if $\lnot a_0Ea_1$,
     \end{cases*}\]
     where $a_0$, $a_1$ are the last two elements of $A$ under $<$. In all remaining undefined cases, the colour of $A$ does not matter; set $c(A)=0$.

     We claim that this colouring has no homogeneous set. Let $A \in [G]^R$. For $A$ to be homogeneous for $c$, it must be the case either that every element of $[A]^R$ has a minimal element or that every element of $[A]^R$ has a maximal element; wlog every element of $[A]^R$ has a minimal element. Then we may assume, by reducing further if necessary, that $A$ is well-ordered by $<$, and that $\otp\langle A,<\rangle$ is minimal in $\{\otp \langle A',<\rangle : A' \in [A]^R\}$. Then for any $B \subseteq A$ which has strictly smaller order type than $A$, it must be the case that $A \setminus B \in [A]^R$. In particular, letting $x, y \in A$, $x < y$ be such that
     \[xEy \iff \lnot a_0 E a_1,\]
     we see that $A' \coloneqq \{x,y\} \cup \{a \in A : a > y\}$ is an element of $[A]^R$; but since $a'_0 = x$ and $a'_1 = y$, we have $c(A) \neq c(A')$, and so $A$ is not homogeneous for $c$.
     \end{proof}
     We remark that exactly the same proof gives a more general result. Let us say that a relation $r$ of arity $n$ on a structure $A$ is \emph{trivialised} by the linear order $\Aa$ if whenever $a_0 < a_1 < \dots < a_{n-1}$ and $a'_0 < a'_1 < \dots < a'_{n-1}$, 
     \[A \models r(a_0,a_1,\dots,a_{n-1}) \iff r(a_0',a_1',\dots,a_{n-1}').\]
     The final step of the proof of Proposition \ref{randomgraph:prop} simply relies on the fact that the edge relation $E$ is not trivialised by any linear ordering of $R$. As such, we obtain the following generalisation:
     \begin{thm}\label{pigeonholegeneral:thm}
         Let $\mathcal L$ be a relational language, let $A$ be an $\mathcal L$-structure whose underlying set is linearly orderable and let $B$ be an $\mathcal L$-structure with the pigeonhole property with at least one relation which is not trivialised by any linear order on $B$. %Assume either \s{DC} or that $B$ is well-orderable.
         Then
        \[A \centernot \rightarrow (B)^B.\]
     \end{thm}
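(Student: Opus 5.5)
We follow the proof of Proposition \ref{randomgraph:prop} essentially verbatim, replacing the random graph $R$ by $B$ and the edge relation $E$ by a fixed relation $r$ of arity $n$ which is not trivialised by any linear order on $B$. Fix a linear order $<$ on the underlying set of $A$, and assume $[A]^B\neq\emptyset$, since otherwise there is nothing to prove. We define $c:[A]^B\to 2$ in stages.

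\textbf{Minimal and maximal elements.} First, if $X\in[A]^B$ has subcopies both with and without a $<$-minimal element, colour $X$ according to whether $X$ itself has a minimal element. Otherwise, if $X$ has subcopies both with and without a $<$-maximal element, colour $X$ according to whether it has a maximal element. The case in which every subcopy has neither a minimum nor a maximum is ruled out as in the proposition. Pick $a<b$ in $X$ and split $X$ into $X_0=\{a\}\cup\{x\in X:x>b\}$ and $X_1=X\setminus X_0$. By the pigeonhole property one of these pieces lies in $[X]^B$, and that piece has a minimum (namely $a$) or a maximum (namely $b$).

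\textbf{Reduction to well-ordered copies.} The Claim goes through unchanged, and it uses no choice. If every subcopy has a minimum, let $S$ be the maximal $<$-well-ordered initial segment of $X$. Then $X\setminus S$ has no minimum, so by the pigeonhole property $S\in[X]^B$. The dual statement gives an anti-well-ordered subcopy when every subcopy has a maximum. No choice is needed to then pick an order-type-minimal well-ordered subcopy: the relevant order types form a set of ordinals, so the minimum exists, and we only ever reduce to some such subcopy, never choose one uniformly.

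\textbf{The final colouring and the main obstacle.} For $X$ well-ordered by $<$, let $x_0<\dots<x_{n-1}$ be its first $n$ elements and set $c(X)=0$ iff $r(x_0,\dots,x_{n-1})$ holds. Dually, for anti-well-ordered $X$ use its last $n$ elements. The step needing a little care is non-homogeneity. Take a homogeneous candidate $X$ that is well-ordered with minimal order type among its well-ordered subcopies. If $Y\subseteq X$ has strictly smaller order type, then $Y\notin[X]^B$: otherwise the Claim applied to $Y$ would produce a well-ordered subcopy of order type at most that of $Y$, contradicting minimality. Hence, by the pigeonhole property, $X\setminus Y\in[X]^B$. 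Since $r$ is not trivialised by the restriction of $<$ to $X$ (transported to $B$ via an isomorphism), there are increasing tuples $\bar u$, $\bar v$ in $X$ on which $r$ disagrees. One of them, say $\bar u=(u_0<\dots<u_{n-1})$, satisfies $r(\bar u)$ iff $\lnot r(x_0,\dots,x_{n-1})$. Apply the above with $Y=\{a\in X: a\le u_{n-1}\}\setminus\{u_0,\dots,u_{n-1}\}$, which is a proper initial segment minus finitely many points. To make this legitimate we need $Y$ to have strictly smaller order type than $X$; if instead $X$ has order type at most $\omega$, replace the bounded part by a finite set, which again has smaller type. In either case $X'=\{u_0,\dots,u_{n-1}\}\cup\{a\in X:a>u_{n-1}\}$ lies in $[X]^B$, its first $n$ elements are $\bar u$, and so $c(X')\ne c(X)$. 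The anti-well-ordered case is symmetric. Finally, colour every remaining $X$ by $0$; this is harmless, because the argument above shows that every potentially homogeneous $X$ has a subcopy that falls into one of the cases already handled.
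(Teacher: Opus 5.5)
Your proposal is correct and is essentially the paper's argument. The paper proves this theorem by running the proof of Proposition \ref{randomgraph:prop} verbatim, and the non-trivialisation of $r$ supplies the increasing tuple $\bar u$ that replaces the pair $x<y$ with $xEy \iff \lnot a_0Ea_1$, exactly as you do. Your hedge about order type at most $\omega$ and your appeal to the Claim for $Y$ are unnecessary: $Y$ is itself well-ordered and lies inside the proper initial segment strictly below $u_{n-1}$, so it automatically has strictly smaller order type than $X$.
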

\subsection*{Acknowledgements}
    This research was funded in whole or in part by the Austrian Science Fund (FWF) [10.55776/ESP5711024]. For open access purposes, the authors have applied a CC BY public copyright license to any author-accepted manuscript version arising from this submission.

\end{document}

%% file: header.tex
\usepackage{graphicx} % Required for inserting images
\usepackage{amsmath}
\usepackage{amstext}
\usepackage{mathtools}
\usepackage{amssymb}
\usepackage{textcomp}
\usepackage{listings}
\usepackage{graphicx}
\usepackage{physics}
\usepackage{relsize}
\usepackage{exscale}
\usepackage{amsthm}
\usepackage{amsthm,amsmath}
\usepackage{amsmath,mathdots}
\usepackage{bbm}
\usepackage{mathrsfs}
\usepackage{tikz-cd}
\usepackage{colonequals}
\usepackage{centernot}
\usepackage{thmtools, thm-restate}
\usepackage{refcount}
\newcommand{\s}[1]{\textup{\textsf{#1}}}

\renewcommand{\L}{\langle L,<\rangle}

\newcommand{\Q}{\mathbb{Q}}

\newcommand{\im}{\mathbin{\hbox{\tt\char'42}}}

\DeclareMathOperator{\sym}{sym}
\DeclareMathOperator{\fix}{fix}

\newcommand{\twoalphalex}{\langle {}^\alpha 2,<_{\operatorname{lex}}\rangle}
\newcommand{\twoalpha}{{}^\alpha 2}

\newcommand{\lessomega}{{}^{<\omega}2}
\newcommand{\lessomegatree}{\langle \lessomega, \sqsubseteq \rangle}
\newcommand{\lessomegalex}{\langle \lessomega, \lex \rangle}

\newcommand{\lex}{<_{\operatorname{lex}}}

\newcommand{\ccomega}{\mathbf{cc}^\omega}
\newcommand{\ccomegastar}{\mathbf{cc}^{\omega^*}}

\DeclareMathOperator{\otp}{otp}
\newcommand{\Aa}{\langle A, < \rangle}

\newcommand{\Aprec}{\langle A,\prec \rangle}
\newcommand{\Bprec}{\langle B,\prec \rangle}
\newcommand{\Ii}{\langle I, < \rangle}
\newcommand{\Iprec}{\langle I, \prec \rangle}
\newcommand{\Jj}{\langle J, < \rangle}
\newcommand{\Jprec}{\langle J, \prec \rangle}
\newcommand{\Qq}{\langle \Q, < \rangle}
\newcommand{\Qprec}{\langle \Q, \prec \rangle}

\newcommand{\szero}{s ^\frown \langle 0 \rangle}
\newcommand{\sone}{s ^\frown \langle 1 \rangle}

\newcommand{\D}{\mathscr D}
\newcounter{thmcount}
\newtheorem{thm}[thmcount]{Theorem}
\newtheorem{prop}[thmcount]{Proposition}
\newtheorem{defn}[thmcount]{Definition}
\newtheorem{lemma}[thmcount]{Lemma}

\newtheorem{obs}[thmcount]{Observation}
\newtheorem*{thm*}{Theorem}

\newtheorem{fact}[thmcount]{Fact}
\theoremstyle{definition}

\newcounter{claimcount}
\numberwithin{claimcount}{thmcount}
\newtheorem{claim}[claimcount]{Claim}
\newcommand*{\claimproofname}{Proof of claim}
\newenvironment{claimproof}[1][\claimproofname]{\begin{proof}[#1]}{\end{proof}}
\newtheorem{subfact}[claimcount]{Fact}

\counterwithin{thmcount2}{section}

\newcounter{casecount}
\newtheorem{case}[casecount]{Case}
\newcommand{\dmin}{d^\textnormal{min}}
\newcommand{\deltaimin}{\delta_i^\textnormal{min}}

\usepackage{cite}
\usepackage[shortlabels]{enumitem}
\usepackage[new]{old-arrows}
\usepackage{slashed}

\newcounter{scratch}

\usepackage{lipsum}
\newcommand\blfootnote[1]{%
  \begingroup
  \renewcommand\thefootnote{}\footnote{#1}%
  \addtocounter{footnote}{-1}%
  \endgroup
}